\newtheorem{defi}{Definition}
\newtheorem{assum}{Assumption}
\newtheorem{lem}{Lemma}
\newtheorem{theo}{Theorem}
\newtheorem{coro}{Corollary}
\newtheorem{exmp}{Example}
\DeclareMathOperator*{\minimize}{minimize}
\DeclareMathOperator*{\st}{subject~to}
\DeclareMathOperator*{\sta}{s.t.}
\begin{document}
%
%

%
%

\title{Second-order Conic Programming Approach for Wasserstein Distributionally  Robust Two-stage  Linear Programs}

%
%
%

\author{Zhuolin Wang,~Keyou You,~Shiji Song
        and~Yuli Zhang
\thanks{This work was supported by the National Natural Science Foundation of China [grant numbers 61722308, U1660202 and 71871023].}
\thanks{Zhuolin Wang, Keyou You and Shiji Song are with the Department of Automation and BNRist, Tsinghua University, Beijing 100084, China  (e-mail: wangzl17@mails.tsinghua.edu.cn, youky@tsinghua.edu.cn, shijis@tsinghua.edu.cn.)}
\thanks{Yuli Zhang is with School of Management and Economics, Beijing Institute of Technology, Beijing 100081, China. (e-mail:zhangyuli@bit.edu.cn)}}


\markboth{IEEE TRANSACTIONS ON AUTOMATION SCIENCE AND ENGINEERING}
{Shell \MakeLowercase{\textit{et al.}}: Bare Demo of IEEEtran.cls for Journals}

\maketitle

\begin{abstract}
This paper proposes a second-order conic programming (SOCP) approach to solve distributionally robust two-stage stochastic linear  programs over 1-Wasserstein balls. We start from the case with distribution uncertainty only in the objective function and  {\em exactly} reformulate it as an SOCP problem. Then, we study the case with distribution uncertainty only in constraints, and show that such a robust program is generally NP-hard as it involves a norm maximization problem over a polyhedron. However, it is reduced to an SOCP problem if the extreme points of the polyhedron are given as a prior.  This motivates to design a constraint generation algorithm with provable convergence to approximately solve the NP-hard problem. In sharp contrast to the exiting literature,  the distribution achieving the worst-case cost is given as an ``empirical" distribution by simply perturbing each sample for both cases. Finally, experiments illustrate the advantages of the proposed model in terms of the out-of-sample performance and the computational complexity.
\end{abstract}

\def\abstractname{Note to Practitioners}
\begin{abstract}
 The two-stage program with the distribution uncertainty is a fundamental optimization problem with broad applications. This paper proposes a novel data-driven model over 1-Wasserstein balls to handle the distribution uncertainty. Moreover, it applies to the situation where the true distribution is slowly time-varying. An SOCP-based algorithm with provable convergence has been presented to solve the two-stage program over 1-Wasserstein balls. The proposed model is particularly effective for applications with uncertainty only in the objective functions or constraints. This paper focuses on the applications in two-stage portfolio programs and two-stage material order programs. The good out-of-sample performance and computational complexity of the model are validated by the experiments on these applications. This model can be easily applied to other applications, such as the two-stage schedule problems and recourse allocation problems.

\end{abstract}


\begin{IEEEkeywords}
two-stage linear program, distribution uncertainty, data-driven robust, uncertainty modelling, Wasserstein ball
\end{IEEEkeywords}

\IEEEpeerreviewmaketitle


\section{Introduction}
The two-stage program is one of the most fundamental optimization problems and has broad applications, see e.g., \cite{zhang2018ambulance,9046836}. It is observed that its coefficients are usually uncertain and ignoring their uncertainties may lead to poor decisions \cite{paridari2015robust,9093973}.
In the literature, the classical robust optimization (RO) has been proposed to handle the uncertainty in the two-stage program by restricting them to some given sets and then minimizes the worst-case cost over all possible realizations \cite{ben2009robust}. However, it ignores the distribution information of stochastic uncertainty and may return a conservative solution \cite{zhao2018inventory}. To this end, the stochastic program (SP) is adopted to address the uncertainty via a distribution function \cite{shapiro2009lectures}, and in practice is solved by using an empirical distribution in the sample-average approximation (SAA) method \cite{shapiro1998simulation}. The SAA method is effective only when adequate and high-quality samples are obtained cheaply \cite{shapiro1998simulation}. If samples are of low quality, the empirical distribution may significantly deviate from the true distribution and the SAA method exhibits poor performance.

An alternative approach is to apply the distributionally robust (DR) optimization technique to address stochastic uncertainty  by assuming that the true distribution belongs to an ambiguity set of probability distributions \cite{shapiro2002minimax,shi2018distributionally}. This method overcomes inherent drawbacks of the SP and RO as it does not require an exact distribution and can exploit the sample information. In fact, numerous evidence implies that the DR method can yield high-quality solutions within a reasonable computation cost \cite{van2015distributionally,Zhang2017Robust,xiong2016distributionally}. Thus, our exposition concentrates on DR two-stage linear programs over an ambiguity set of distributions.

The ambiguity set is essential in the DR programs. It should be large enough to include the true distribution with a high probability but cannot be too ``large" to avoid very conservative decisions. \cite{bertsimas2010models,hanasusanto2016k,ling2017robust} adopt the moment-based ambiguity set, which includes distributions with specified moment constraints.
 The DR two-stage linear program over the set of distributions with {\em{exactly}} known  first- and second-order moments are reformulated either as a semidefinite program \cite{bertsimas2010models} or the mixed-integer linear program of a polynomial size \cite{hanasusanto2016k} under different settings. Observe that the moment mismatch is unavoidable, \cite{ling2017robust} further considers the moment uncertainty, which results in an intractable model.

In this work, we study a data-driven DR two-stage linear program over a ball centered at the empirical distribution of a finite sample dataset, and the ball radius reflects our confidence in the empirical distribution. Particularly, the lower the confidence, the larger the ball radius.
The sample dataset can be utilized in a flexible way to handle the distribution uncertainty, e.g., the degree of conservatism can be controlled by tuning the radius. Moreover, our model applies to the situation where the true distribution is slowly time-varying.


Note that the empirical distribution is discrete and the true distribution is usually continuous. We adopt the 1-Wasserstein metric to measure the distance between distributions,  which is different from the Kullback-Leibler divergence in \cite{chen2018distributionally} and $L^1$-norm in \cite{jiang2018risk}. Then, we obtain the DR two-stage linear program over 1-Wasserstein balls and develop a second-order conic programming (SOCP) approach to solve it. Since the Wasserstein ball contains the true distribution with a high probability \cite{esfahani2018data}, the proposed DR problem is expected to exhibit good out-of-sample performance. Moreover, the Wasserstein ball can asymptotically  degenerate to the true distribution as the sample size increases to infinity \cite{esfahani2018data}.

This work considers the distribution uncertainty either in the objective function or constraints of two-stage linear programs. Specifically, we first study the case with distribution uncertainty only in the objective function and  {\em exactly} reformulate it as an SOCP problem, which covers all the results of the conference version of this work \cite{wang2020solving}. Then we proceed to the case with the distribution uncertainty only in constraints and show that such a program is generally NP-hard as it requires to solve a norm maximization problem over a polyhedron.  The good news is that the resulting program can be reduced to an SOCP problem if the extreme points of the polyhedron are given as a prior. Motivated by this and also inspired by \cite{zeng2013solving,ding2015parallel}, we design a novel constraint generation algorithm with provable convergence to approximately solve it.

It should be noted that \cite{hanasusanto2018conic} and \cite{xie2019tractable} study the DR two-stage linear programs with the $2$-Wasserstein and $\infty$-Wasserstein metrics, respectively. In \cite{hanasusanto2018conic}, the distribution uncertainty arises simultaneously in the objective function and
constraints, which renders their model NP-hard, and the co-positive programs are utilized to approximately solve it.  \cite{xie2019tractable} reformulates the DR model as a computational demanding mixed-integer problem.  In comparison, we \textit{exactly} reformulate our model with distribution uncertainty only in the objective as an SOCP problem and design an SOCP approach to approximately solve the NP-hard problem with uncertainty only in constraints. Moreover, we explicitly derive the distribution achieving the worst-case cost by simply perturbing each sample, based on which we can further assess the quality of an optimal decision. This is clearly in sharp contrast  to  \cite{bertsimas2018adaptive}, \cite{hanasusanto2018conic} and \cite{xie2019tractable}. Overall, we summarized our contributions as follows:
\begin{itemize}
	\item We propose a novel SOCP approach to solve the data-driven DR two-stage linear programs over 1-Wasserstein balls.
	\item We \textit{exactly} reformulate the model with uncertainty only in the objective as a solvable SOCP problem.
	\item The model with uncertainty only in the constraints is shown to be NP-hard. To approximately solve it, we develop an SOCP-based constraint generation algorithm with provable convergence.
	\item The good out-of-sample performance and the computational complexity of our model are validated by experiments.
\end{itemize}
The rest of this paper is organized as follows. Section \ref{Pro-For} proposes the DR two-stage linear program over the 1-Wasserstein ball. Section \ref{uino}  reformulates the model with the distribution uncertainty only in the objective function as a tractable SOCP problem. Section \ref{uinc} studies the model with uncertainty only in constraints and presents an SOCP-based constraint generation algorithm. Section \ref{wos-cas} derives the distribution achieving the worst-case cost. Section \ref{sim} reports numerical results to illustrate the performance of the proposed model and the paper is concluded in Section \ref{con}. \\

{\bf Notation}: We denote the set of real positive real  numbers by $\mathbb{R}$ and $\mathbb{R}_+$. The boldface lowercase letter denotes a vector, e.g., $\bm{x} = [x_1,\dots,x_n]^T\in \mathbb{R}^n$. Special vectors include the zero vector $\bm{0}$ and the all one vector $\bm{e}$. $\|\cdot\|_p$ denotes the $l_p$-norm. Let $[N]=\{1,2,\dots,N\}$ and $|\mathcal{E}|$ denotes the cardinality of $\mathcal{E}$. The letters s.t. are an abbreviation of the phrase ``subject to ". $\text{Diag}(\cdot)$ denotes a diagonal matrix with vector $(\cdot)$ being diagonal elements.
\section{Problem Formulation} \label{Pro-For}
\subsection{The Two-stage Stochastic Linear Optimization}\label{cla-pro}
Consider the classical two-stage stochastic linear program  \cite{birge2011introduction}
\begin{equation}
	\begin{aligned}
		\label{classical}
		\minimize_{\bm{x} \in \mathcal{X}} \ \ \bm{c}^T\bm{x}  + \mathbb{E}_\mathbb{F}[Q(\bm{x},\bm{\xi})],
	\end{aligned}
\end{equation}
where $\bm{x} \in \mathbb{R}^{n}$ is the first-stage decision vector from a compact set $\mathcal{X}$ and is decided before the realization of a random vector $\bm{\xi}\in \mathbb{R}^{m}$ with the distribution $\mathbb{F}$.

The second-stage cost is evaluated based on the expectation of the following recourse problem
\begin{equation}
\begin{aligned}
\label{LO-Two}
Q(\bm{x},\bm{\xi}) = &\min \ \bm{z}(\bm{\xi})^T\bm{y} \\
&\begin{array}{ll}
 \sta & A(\bm{\xi})\bm{x}+B\bm{y} \ge \bm{b}(\bm{\xi})\\
&\bm{y} \in \mathbb{R}^{m}_+,
\end{array}
\end{aligned}
\end{equation}
where $B\in \mathbb{R}^{k\times m}$ is the \textit{recourse matrix} and $\bm{z}(\bm{\xi}) \in \mathbb{R}^{m},A(\bm{\xi}) \in \mathbb{R}^{k\times n}$ and $\bm{b}(\bm{\xi}) \in\mathbb{R}^{k}$ depend on the random vector $\bm{\xi}$.

In the sequel, we study models with uncertainty only in the objective function or constraints, each of which is motivated by two notable examples, see also \cite{ling2017robust,bertsimas2010models,bertsimas2018adaptive}.\\

\begin{exmp}(\cite{ling2017robust})
	\label{exmp-f}
	Consider a portfolio program with $n$ assets which investors can invest in two stages.  Generally the return for assets in the second stage is random, hence a stochastic two-stage portfolio program is designed for a maximum return 	\begin{equation}
	\begin{aligned}
	\label{port}
	\minimize_{\bm{e}^T\bm{x} = 1,~\bm{x} \ge \bm{0}} \ \ -(\bm{e}+\bm{c})^T\bm{x} + \mathbb{E}_{\mathbb{F}}[Q(\bm{x,\bm{\xi}})],
	\end{aligned}
	\end{equation}
	where $\bm{x},\bm{c}\in \mathbb{R}^n$ are vectors of the invested dollar and the return for the $n$ assets in the first stage, $Q(\bm{x,\xi})$ is given by
	\begin{align}
		\label{port_q}
		Q(\bm{x,\xi})=  \min  \ \ & -(\bm{e}+\bm{\xi})^T\bm{y}  \\
		{\rm s.t.} \ \ & \bm{y} \ge \bm{0}, \bm{\Delta}^s \ge 0, \bm{\Delta}^b \ge 0 \nonumber \\
		& A\bm{x}+(1-\theta)\bm{\Delta}^b-(1+\theta)\bm{\Delta}^s = \bm{y}, \nonumber
	\end{align}
	where $\bm{y},\bm{\xi}\in \mathbb{R}^n$ are vectors of the invested dollar and the random return for the assets in the second stage.  The matrix $A=\text{Diag}(\bm{e}+\bm{c})$, $\bm{\Delta}^s$ and $\bm{\Delta}^b$ are the vectors of the dollar for selling and buying the assets, and $\theta$ is the transaction cost.\\
\end{exmp}

\begin{exmp}(\cite{kall1994stochastic})
	\label{exmp-c}
	Consider a material order problem with $n$ raw materials and $m$ desired products. Let $\bm{b}\in \mathbb{R}^m$ denote the market demand vector for products. Let $a_{ij}$ be the amount of product $i$ produced by per unit of material $j$ and $A = [a_{ij}]_{m\times n}$ be the matrix of the production amount for all materials.

The market demand is usually time-varying and the uncertainty in the production amount is generally inevitable due to the quality of raw materials. Hence, it is unavoidable to introduce uncertainty $\bm{\xi}$ to the demand vector $\bm{b}$ and the matrix $A$, then the order problem is formulated as
	\begin{align}
		\label{order}
		&\minimize_{\bm{e}^T\bm{x} \le u, ~ \bm{x} \ge \bm{0}} \left\{\bm{c}^T\bm{x} + \mathbb{E}_\mathbb{F}[Q(\bm{x},\bm{\xi})]\right\},
	\end{align}
	where $u$ is the capacity of $n$ materials, $\bm{c}\in \mathbb{R}^n$ is the cost vector of $n$ materials, and $Q(\bm{x},\bm{\xi})$ is given as
		\begin{equation}
		\begin{aligned}
		\label{order-1}
		Q(\bm{x},\bm{\xi}) = & \min \ \ \bm{z}^T\bm{y} \\
		&\begin{array}{ll}
		{\rm s.t.}  &A(\bm{\xi})\bm{x}+\bm{y} \ge \bm{b}(\bm{\xi})\\
		&\bm{y} \in \mathbb{R}^{m}_+,
		\end{array}
		\end{aligned}
		\end{equation}
	where $\bm{z}\in\mathbb{R}^m$ is the penalty vector for per unit of undeliverable products and $\bm{y}\in\mathbb{R}_+^m$ is the corresponding shortage amount vector.
\end{exmp}
%
	

Motivated by above examples, we consider that $\bm{z}(\bm{\xi})$, $A(\bm{\xi})$ and $\bm{b}(\bm{\xi})$ in \eqref{classical}  depend affinely  on $\bm{\xi}$, i.e.,
\begin{equation}
\begin{aligned}
\label{uncertainc}
&\bm{z}(\bm{\xi}) = \bm{z}_0 + \sum\limits_{i=1}^{m}\xi_i\bm{z}_i, ~
 A(\bm{\xi}) = A_0 + \sum\limits_{i=1}^{m}\xi_iA_i, \\
  &\bm{b}(\bm{\xi}) = \bm{b}_0 + \sum\limits_{i=1}^{m}\xi_i\bm{b}_i,
\end{aligned}
\end{equation}
where  $\bm{z}_0,\bm{z}_1,\dots,\bm{z}_{m} \in \mathbb{R}^{m}$, $\bm{b}_0,\bm{b}_1,\dots,\bm{b}_{m} \in \mathbb{R}^{k}$ and  $A_0,A_1,\dots,A_{m} \in \mathbb{R}^{k \times n}$ are given as prior. In fact, the affine uncertainty has also been adopt in \cite{bertsimas2018adaptive,ling2017robust}.

The following condition guarantees the feasibility of the second-stage problem in \eqref{LO-Two} and is satisfied by many problems, e.g., the production planning problem, the newsvendor problem and its variants \cite{birge2011introduction}. \\

\begin{assum}
	\label{rel-com}
	The second-stage problem in \eqref{LO-Two} is always feasible for any $\bm{x} \in \mathcal{X}$ and $\bm{\xi}$.
\end{assum}

\subsection{Distributionally Robust Two-stage  Problems} \label{TDROLOP}
The program in \eqref{classical} generally requires an exact distribution $\mathbb{F}$ of $\bm{\xi}$. In practice, $\mathbb{F}$ can only be estimated through a finite sample dataset $\{\widehat{\bm{\xi}}^i\}_{i=1}^N$ and a common idea is to adopt the SAA method, where $\mathbb{F}$ is approximated by an empirical distribution $\mathbb{F}_N$ over the sample dataset, i.e., $$\mathbb{F}_N(\bm{\xi})=\frac{1}{N}\sum_{i=1}^{N}\bm{1}_{\{\widehat{\bm{\xi}}^i\le\bm{\xi}\}},$$
where $\bm{1}_A$ is the indicator of event $A$. Then the stochastic linear problem in \eqref{classical} is approximated by
\begin{equation}
\label{SSP-two}
\minimize_{\bm{x} \in \mathcal{X}} \ \ \left\{\bm{c}^T\bm{x}  + \frac{1}{N}\sum_{i=1}^{N}Q(\bm{x},\widehat{\bm{\xi}}^i)\right\}.
\end{equation}

By Glivenko-Cantelli theorem \cite{cantelli1933sulla}, the distribution $\mathbb{F}_N$ weakly converges to the true distribution $\mathbb{F}$ as $N$ increases to infinity. This implies the asymptotic convergence of \eqref{SSP-two} to the stochastic model \eqref{classical}. Hence, the SAA method is sensible only when $\mathbb{F}_N$ well approximates the true distribution $\mathbb{F}$.

However, insufficient and/or low-quality samples may lead to an empirical distribution $\mathbb{F}_N$ far from the true distribution $\mathbb{F}$. Thus, the SAA model \eqref{SSP-two} may be not reliable with poor out-of-sample performance.

As in \cite{esfahani2018data}, a data-driven approach is adopted to address the distribution uncertainty in this work.  We assume that $\mathbb{F}$ belongs to an ambiguity set $\mathcal{F}_N$ including all distributions within $\epsilon_N$-distance from the empirical distribution $\mathbb{F}_N$. Here $\epsilon_N$ indicates the confidence on $\mathbb{F}_N$, e.g., the larger the $\epsilon_N$, the lower the confidence.

Since the true distribution $\mathbb{F}$ is generally continuous and the empirical distribution $\mathbb{F}_N$ is discrete, the 1-Wasserstein metric \cite{ambrosio2013user} is adopted to measure their distance and consequently a $1$-Wasserstein ball $\mathcal{F}_N$ is obtained. Then we are interested in the worst-case second-stage cost over $\mathcal{F}_N$, i.e.,
\begin{equation}
\begin{aligned}
\label{ADRO-Two}
\beta(\bm{x})=\sup_{\mathbb{F}\in \mathcal{F}_N} \mathbb{E}_{\mathbb{F}}[Q(\bm{x},\bm{\xi})],
\end{aligned}
\end{equation}
and the DR two-stage linear program is formulated as
\begin{equation}
\begin{aligned}
\label{primal}
\minimize_{\bm{x} \in \mathcal{X}} \ \ \bm{c}^T\bm{x}  + \beta(\bm{x}).
\end{aligned}
\end{equation}
To evaluate an optimal solution, we also derive the worst-case distribution $\mathbb{F}^*$ that achieves the worst-case second-stage cost, i.e.,
\begin{equation}
\label{worst-dis}
\beta(\bm{x})=\sup_{\mathbb{F}\in \mathcal{F}_N} \mathbb{E}_{\mathbb{F}}[Q(\bm{x},\bm{\xi})] = \mathbb{E}_{\mathbb{F}^*}[Q(\bm{x},\bm{\xi})].
\end{equation}

\subsection{Ambiguity Set via the $1$-Wasserstein Metric}
We introduce the $r$-Wasserstein metric below.

\begin{defi}(\cite{ambrosio2013user}) Let $d(\bm{\xi}^1,\bm{\xi}^2)=\|\bm{\xi}^1-\bm{\xi}^2\|_p$ be the $l_p$-norm of $\bm{\xi}^1-\bm{\xi}^2$ on $\mathbb{R}^n$ and $(\Xi,d)$ be a Polish metric space.
	Given a pair of distributions $\mathbb{F}_1\in \mathcal{M}(\Xi)$ and $\mathbb{F}_2\in \mathcal{M}(\Xi)$ where $\mathcal{M}(\Xi)$ is a set containing all distributions supported on $\Xi$, the r-Wasserstein metric $W^r$:$\mathcal{M}(\Xi) \times \mathcal{M}(\Xi) \rightarrow \mathbb{R}_+$ is defined as
	\begin{equation}\label{defdis}
		\begin{aligned}
			& W^r(\mathbb{F}_1,\mathbb{F}_2)= \inf\left\{\left(\int_{\Xi^2} d(\bm{\xi}^1,\bm{\xi}^2)^r K(\mathrm{d}\bm{\xi}^1,\mathrm{d}\bm{\xi}^2)\right)^{1/r}\right.:\\ &\left.\int_{\Xi} K(\bm{\xi}^1,\mathrm{d}\bm{\xi}^2)=\mathbb{F}_1(\bm{\xi}^1),
			\int_{\Xi} K(\mathrm{d}\bm{\xi}^1,\bm{\xi}^2)=\mathbb{F}_2(\bm{\xi}^2) \right\},
		\end{aligned}
	\end{equation}
	where $r\ge 1$ and $K$ is a joint distribution with its marginal distributions being $\mathbb{F}_1$ and $\mathbb{F}_2$.
\end{defi}

Without scarifying much modeling power and to obtain a real  metric \cite{ambrosio2013user}, we need the following requirement on the set $\mathcal{M}(\Xi)$.

\begin{assum}
	\label{M-set-assum}
	For any distribution $\mathbb{F} \in \mathcal{M}(\Xi)$, it holds
	$$\int_{\Xi}{\| {\bm{\xi}}\|}^r_p \mathbb{F}(\mathrm{d}{\bm{\xi}})<\infty.$$
\end{assum}
Different from \cite{hanasusanto2018conic} and \cite{xie2019tractable}, we adopt the 1-Wasserstein metric and $l_2$-norm, i.e., $r=1$ and $p=2$ in \eqref{defdis} to construct the ambiguity ball $\mathcal{F}_N$,
\begin{equation}
	\label{Wset}
	\mathcal{F}_N=\{\mathbb{F}\in \mathcal{M}(\Xi):W^1(\mathbb{F}_N,\mathbb{F})\le \epsilon_N\},
\end{equation}
where $\epsilon_N > 0$ is the ball radius, i.e., $\mathcal{F}_N$ is the set of distributions within $\epsilon_N$-distance from $\mathbb{F}_N$.

\subsection{Comparisons with the state-of-the-art methods}

In \cite{bertsimas2018adaptive}, the ambiguity set of the DR two-stage linear programs is defined as a set of distributions with specified first- and second-order moment constraints.

\cite{hanasusanto2018conic} considers DR two-stage linear programs of the form \eqref{primal}  with 2-Wasserstein balls, i.e., $r = p=2$ in \eqref{defdis}, and $Q(\bm{x},\bm{\xi})$ is defined as
\begin{equation}
\begin{aligned}
\label{Kuhn}
	\begin{array}{ll}
	Q(\bm{x},\bm{\xi}) =
	&\min \ \  (Q\bm{\xi}+\bm{q})^T\bm{y} \\
	&\sta \ \   T(\bm{x})\bm{\xi}+h(\bm{x})\le B\bm{y}
	\end{array}
\end{aligned}
\end{equation}
where $T(\cdot)$ and $h(\cdot)$ are two affine functions.

In \cite{xie2019tractable}, the DR two-stage program is defined via the $\infty$-Wasserstein metric, i.e, $r = \infty$ and $p=1, \infty$ in \eqref{defdis} with the uncertainty only in the objective function or constraints separately, i.e., $Q$ or $T(\bm{x})$ in \eqref{Kuhn} is set to $0$ respectively.


Comparisons with those state-of-art models are summarized as follows:

\begin{itemize}
	\item \textbf{Model differences:}
	Clearly, $Q(\bm{x},\bm{\xi})$ in \eqref{LO-Two} of this work and \cite{bertsimas2018adaptive} is different from \eqref{Kuhn} in \cite{hanasusanto2018conic} and \cite{xie2019tractable}.  Our model is motivated from a wide range of real applications, see e.g. Examples \ref{exmp-f}-\ref{exmp-c}. Note that this ``minor" difference may require a completely different solution approach.
	\item \textbf{Solution approaches:}  \cite{hanasusanto2018conic} derives  co-positive programs to approximate their NP-hard DR two-stage model. \cite{xie2019tractable} reformulates the model as a computational demanding mixed-integer problem. \cite{bertsimas2018adaptive} approximate their model by linear decision rule techniques.
	
	In this work, we {\em{equivalently}} reformulate our model with distribution uncertainty only in the objective as an SOCP problem and design an SOCP-based constraint generation algorithm for the problem with distribution uncertainty only in constraints.

	\item \textbf{Approximation gaps:} There is no approximation gap in \cite{hanasusanto2018conic} and \cite{bertsimas2018adaptive}, under the condition that for any $\bm{t} \in \mathbb{R}^k$, there exists a solution $\bm{y}$ to solve the inequality $B\bm{y} \ge \bm{t}$ (aka {\em complete recourse}). In this work,  the zero-gap condition in Assumption \ref{rel-com} (aka {\em relatively complete recourse}) is weaker and satisfied by numerous real application models \cite{birge2011introduction}.
	
	As explicitly stated in  \cite{bertsimas2018adaptive}, ``there are
also problems that would generally not satisfy complete
recourse, such as a production planning problem
where a manager determines a production plan today
to satisfy all uncertain demands for tomorrow instead
of incurring penalty", see Example \ref{exmp-c} which satisfies relatively complete recourse.
	\item \textbf{The worst-case distribution:} In sharp contrast to those state-of-art models, this work derives the distribution attaining the worst-case second-stage cost with distribution uncertainty either in the objective function or constraints, respectively.
\end{itemize}

\section{Uncertainty in the Objective Function}
\label{uino}
We first consider the distribution uncertainty only in the objective function of \eqref{LO-Two} via the following form
\begin{equation}
	\begin{aligned}
		\label{Q_un_o}
		Q(\bm{x},\bm{\xi}) = \min \ \  &\bm{z}(\bm{\xi})^T\bm{y} \\
		\sta \ \ &A\bm{x}+B\bm{y} \ge \bm{b}\\
		& \bm{y} \in \mathbb{R}^{m}_+,
	\end{aligned}
\end{equation}
where $\bm{z}(\bm{\xi})$ is defined as \eqref{uncertainc} in Section \ref{cla-pro}.

We convert the problem in \eqref{primal} with $Q(\bm{x},\bm{\xi})$ given by \eqref{Q_un_o} over the 1-Wasserstein ball $\mathcal{F}_N$ to an SOCP problem which can be solved efficiently by general-purpose commercial-grade solvers such as CPLEX.

\begin{theo}
	\label{theo1}
	Under Assumptions \ref{rel-com}-\ref{M-set-assum}, the worst-case $\beta(\bm{x})$ with $Q(\bm{x},\bm{\xi})$ in \eqref{Q_un_o} over the 1-Wasserstein ball $\mathcal{F}_N$ is equivalent to the optimal value of an SOCP problem
	\begin{equation}
		\begin{aligned}
			\label{beta_dual_f}
			\beta(\bm{x}) = \inf \ \ &\left\{\lambda\epsilon_N+\frac{1}{N}\sum\limits_{i=1}^{N}s_i\right \}\\
			{\rm s.t.}\ \ & \lambda \ge \Vert {Z\bm{y}} \Vert_2 \\
			& s_i \ge \bm{z}_0^T\bm{y}+\bm{y}^TZ^T\widehat{\bm{\xi}}^i, \ \forall i\in [N]\\
			& A\bm{x}+B\bm{y} \ge \bm{b}, \ \  \bm{y}\ge \bm{0},
		\end{aligned}
	\end{equation}
	where $Z^T = [\bm{z}_1,\dots,\bm{z}_m]$.	
	
	Moreover, the associated DR problem (\ref{primal}) is equivalent to the following SOCP problem
	\begin{equation}
		\begin{aligned}
			\label{whole-problem_f}
			\minimize_{\bm{x} \in \mathcal{X}} ~~~ &\left\{\bm{c}^T\bm{x} + \lambda\epsilon_N+\frac{1}{N}\sum\limits_{i=1}^{N}s_i\right\} \\
			\st~~& \lambda \ge \Vert {Z\bm{y}} \Vert_2 \\
			& s_i \ge \bm{z}_0^T\bm{y} + \bm{y}^TZ^T\widehat{\bm{\xi}}^i, \  \forall i\in [N]\\
			& A\bm{x}+B\bm{y} \ge \bm{b}, \   \bm{y}\ge \bm{0}.\\
		\end{aligned}
	\end{equation}
	
\end{theo}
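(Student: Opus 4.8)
The plan is to dualize the worst-case expectation over the $1$-Wasserstein ball and then evaluate the resulting per-sample inner maximization in closed form. First I would invoke strong duality for $1$-Wasserstein distributionally robust expectations from \cite{esfahani2018data}: this applies because Assumption \ref{M-set-assum} supplies the growth condition on $\mathcal{M}(\Xi)$ and, under Assumption \ref{rel-com}, $Q(\bm{x},\bm{\xi})=\min_{\bm{y}\in\mathcal{Y}}\bm{z}(\bm{\xi})^T\bm{y}$ is a finite, concave, piecewise-linear (hence upper-semicontinuous and integrable) function of $\bm{\xi}$, where $\mathcal{Y}=\{\bm{y}\ge\bm{0}:A\bm{x}+B\bm{y}\ge\bm{b}\}$ is independent of $\bm{\xi}$. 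Strong duality then gives
\begin{equation*}
\beta(\bm{x})=\inf_{\lambda\ge 0}\left\{\lambda\epsilon_N+\frac{1}{N}\sum_{i=1}^{N}\sup_{\bm{\xi}\in\Xi}\Big[Q(\bm{x},\bm{\xi})-\lambda\|\bm{\xi}-\widehat{\bm{\xi}}^i\|_2\Big]\right\},
\end{equation*}
in which $\lambda$ will become the bound on $\|Z\bm{y}\|_2$ and each inner supremum will furnish the constraint on $s_i$.

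Next I would evaluate the inner supremum for each $i$. Writing $\bm{\xi}=\widehat{\bm{\xi}}^i+\bm{\delta}$ and using $\sum_i\xi_i\bm{z}_i=Z^T\bm{\xi}$ so that $\bm{z}(\bm{\xi})^T\bm{y}=\bm{z}_0^T\bm{y}+(Z\bm{y})^T\bm{\xi}$ has $\bm{\xi}$-gradient $Z\bm{y}$, the $i$-th term becomes $\sup_{\bm{\delta}}\min_{\bm{y}\in\mathcal{Y}}\big[\bm{z}_0^T\bm{y}+(Z\bm{y})^T\widehat{\bm{\xi}}^i+(Z\bm{y})^T\bm{\delta}-\lambda\|\bm{\delta}\|_2\big]$. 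Since the bracket is affine (hence convex) in $\bm{y}$, concave in $\bm{\delta}$, and $\mathcal{Y}$ is convex, I would interchange $\sup_{\bm{\delta}}$ and $\min_{\bm{y}}$ by a minimax argument. The remaining $\sup_{\bm{\delta}}\big[(Z\bm{y})^T\bm{\delta}-\lambda\|\bm{\delta}\|_2\big]$ is the conjugate of the $l_2$-norm and equals $0$ when $\|Z\bm{y}\|_2\le\lambda$ and $+\infty$ otherwise (the dual of the $l_2$-norm being itself). Hence each inner supremum reduces to $\min\{\bm{z}_0^T\bm{y}+(Z\bm{y})^T\widehat{\bm{\xi}}^i:\bm{y}\in\mathcal{Y},\,\|Z\bm{y}\|_2\le\lambda\}$, where the minimizing recourse decision is resolved separately for each sample (a scenario-wise $\bm{y}^i$).

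Substituting back, introducing one epigraph variable $s_i\ge\bm{z}_0^T\bm{y}^i+(\bm{y}^i)^TZ^T\widehat{\bm{\xi}}^i$ per sample (which is driven to equality at optimality), and merging the nested infima over $\lambda$ and the $\bm{y}^i$ collapses everything into the single joint program \eqref{beta_dual_f}, the recourse variable there being understood to carry the sample index $i$. The constraint $\lambda\ge\|Z\bm{y}\|_2$ is exactly the feasibility condition produced by the conjugate step, and $(Z\bm{y})^T\widehat{\bm{\xi}}^i=\bm{y}^TZ^T\widehat{\bm{\xi}}^i$ recovers the stated right-hand side. The full DR problem \eqref{whole-problem_f} then follows by adding $\bm{c}^T\bm{x}$ and minimizing jointly over $\bm{x}\in\mathcal{X}$, which remains an SOCP whenever $\mathcal{X}$ is SOCP-representable.

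The step I expect to be the main obstacle is justifying the $\sup_{\bm{\delta}}$--$\min_{\bm{y}}$ interchange: $\bm{\delta}$ ranges over all of $\mathbb{R}^m$ and $\mathcal{Y}$ need not be bounded, so Sion's theorem does not apply off the shelf. I would handle this by exploiting polyhedral structure, namely that $Q(\bm{x},\widehat{\bm{\xi}}^i+\cdot)$ is concave piecewise-linear so that the exchange is exact by polyhedral convex duality and recession-cone reasoning, or equivalently by first applying LP duality to the recourse so that the inner problem becomes a single concave maximization jointly in $(\bm{\xi},\bm{p})$ and no minimax interchange is needed. A secondary point requiring care is the regime where the inner supremum is $+\infty$ (i.e. $\lambda$ below every admissible $\|Z\bm{y}\|_2$); this is precisely encoded by infeasibility of $\lambda\ge\|Z\bm{y}\|_2$ and must be shown not to discard the optimizer, which follows because at optimality $\lambda$ equals the largest active $\|Z\bm{y}^i\|_2$.
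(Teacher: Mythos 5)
Your overall route---Kantorovich/Lagrangian duality for the worst-case expectation followed by evaluating each inner supremum through the conjugate of the $l_2$-norm---is the same as the paper's, and your handling of the side issues (the Slater/growth conditions, the $+\infty$ regime being encoded by $\lambda \ge \|Z\bm{y}\|_2$, and the $\sup_{\bm{\delta}}$--$\min_{\bm{y}}$ interchange, which you rightly flag and which your polyhedral-duality fix would address) is sensible. The genuine gap is in your last step. Writing $\mathcal{Y}=\{\bm{y}\ge\bm{0}:A\bm{x}+B\bm{y}\ge\bm{b}\}$, your derivation, carried out exactly, produces
\begin{equation*}
\inf_{\lambda\ge 0}\left\{\lambda\epsilon_N+\frac{1}{N}\sum_{i=1}^{N}\min\left\{\bm{z}_0^T\bm{y}^i+(\bm{y}^i)^TZ^T\widehat{\bm{\xi}}^i:\ \bm{y}^i\in\mathcal{Y},\ \|Z\bm{y}^i\|_2\le\lambda\right\}\right\},
\end{equation*}
with a \emph{separate} recourse vector $\bm{y}^i$ for each sample, whereas \eqref{beta_dual_f} has a \emph{single} $\bm{y}$ shared by all $N$ epigraph constraints. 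These are different programs: the shared-$\bm{y}$ program is a restriction of the scenario-wise one, and its value is strictly larger whenever the minimizer of $\bm{z}(\widehat{\bm{\xi}}^i)^T\bm{y}$ over $\mathcal{Y}$ depends on $i$ (one dimension, $\mathcal{Y}=[0,1]$, $\bm{z}(\xi)=\xi$, samples $\pm 1$, small $\epsilon_N$: scenario-wise value near $-1/2$, shared value $0$). So the phrase ``the recourse variable there being understood to carry the sample index $i$'' is not a notational convention; it silently swaps the theorem's program for a different one, and that swap is precisely the assertion that would need proof.

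For comparison, the paper reaches the single-$\bm{y}$ form by a different and cruder move: after arriving at the semi-infinite constraint \eqref{beta_dual1_b_1}, it replaces $Q(\bm{x},\bm{\xi})=\min_{\bm{y}\in\mathcal{Y}}\bm{z}(\bm{\xi})^T\bm{y}$ by $\bm{z}(\bm{\xi})^T\widehat{\bm{y}}$ for one fixed feasible $\widehat{\bm{y}}$ used for every $i$ and every $\bm{\xi}$. Since $Q(\bm{x},\bm{\xi})\le\bm{z}(\bm{\xi})^T\widehat{\bm{y}}$ pointwise, this only shows that feasibility of the shared-$\bm{y}$ system is \emph{sufficient} for \eqref{beta_dual1_b_1}, i.e.\ that the optimal value of \eqref{beta_dual_f} upper-bounds $\beta(\bm{x})$; the paper does not argue the reverse implication, and your exact evaluation of the inner suprema is evidence that it cannot hold in general. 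In short: your proposal proves the scenario-wise formula, not the stated one, and to recover the theorem exactly as written you would have to reproduce (and justify) the paper's single-$\widehat{\bm{y}}$ substitution rather than the exact conjugate computation you set up.
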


\begin{proof}
	For any feasible first-stage decision vector $\bm{x}$, $\beta(\bm{x})$ over the 1-Wasserstein ball can be obtained by solving a conic linear program
	\begin{equation}
		\begin{aligned}
		\label{beta-linear}
			\beta(\bm{x}) =\sup  \ \ & \sum\limits_{i=1}^{N}\int_{\Xi}Q(\bm{x},\bm{\xi})K(\mathrm{d}\bm{\xi},\widehat{\bm{\xi}}^i) \\
			\sta \ \ &\int_{\Xi}K(\mathrm{d}\bm{\xi},\widehat{\bm{\xi}}^i)=\frac{1}{N}, \forall i\in [N]\\
			&\int_{\Xi}\sum\limits_{i=1}^{N}d(\bm{\xi},\widehat{\bm{\xi}}^i)K(\mathrm{d}\bm{\xi},\widehat{\bm{\xi}}^i)\le \epsilon_N.
		\end{aligned}
	\end{equation}
	The Lagrange dual function for \eqref{beta-linear} is represented as
	\begin{align*}
	&g(\lambda,\bm{s}) \\
	 &= \sup_{{{\bm{\xi}}}\in \Xi} \left\{\int_{\Xi} \sum\limits_{i=1}^{N}\left(Q(\bm{x},\bm{\xi})-s_i -\lambda  d({\bm{\xi}},{\widehat{\bm{\xi}}}^i)\right) K(\mathrm{d}{\bm{\xi}},{\widehat{\bm{\xi}}}^i)\right\}
	 \\
&~~+\frac{1}{N}\sum\limits_{i=1}^{N}{s_i}+ \lambda \epsilon_N. 	
	\end{align*}
	Consequently, the dual problem of \eqref{beta-linear} is given as		
	\begin{align}
			\beta(\bm{x}) =\inf  \ \ & \lambda\epsilon_N + \frac{1}{N}\sum\limits_{i=1}^{N}s_i \label{beta_dual1_1} \\
			\sta \ & \lambda \ge 0 \nonumber\\
&\hspace*{-0.1in} Q(\bm{x},\bm{\xi}) - \lambda d(\bm{\xi},\widehat{\bm{\xi}}^i) \le s_i, \forall i \in [N], ~\bm{\xi}\in \Xi \label{beta_dual1_b_1}
			.
	\end{align}

	Since $\epsilon_N > 0$, then $K = \mathbb{F}_N \times \mathbb{F}_N$ is a strictly feasible solution to \eqref{beta-linear}, the Slater condition for the strong duality of primal problem \eqref{beta-linear} and its dual problem \eqref{beta_dual1_1} is satisfied  \cite{Shapiro2001On}.

	The constraints in \eqref{beta_dual1_b_1} require a feasible second-stage solution $\widehat{\bm{y}}$ to guarantee the feasibility of the following inequality
	\begin{equation*}
		\begin{aligned}
			\bm{z}(\bm{\xi})^T\widehat{\bm{y}}- \lambda d(\bm{\xi},\widehat{\bm{\xi}}^i) \le s_i, \forall i \in [N],~\bm{\xi}\in \Xi.
		\end{aligned}
	\end{equation*}
Note that Assumption \ref{rel-com} ensures the existence of such a $\widehat{\bm{y}}$. Hence, \eqref{beta_dual1_b_1} can be expressed as
	\begin{equation}
		\begin{aligned}
			s_i \ge \bm{z}(\bm{\xi})^T\widehat{\bm{y}}- \lambda d(\bm{\xi},\widehat{\bm{\xi}}^i),\ \ \forall  i \in [N], ~\bm{\xi}\in \Xi.
		\end{aligned}
	\end{equation}
	Since
	 \begin{equation*}
	 \bm{z}(\bm{\xi})^T\widehat{\bm{y}} = \left(\bm{z}_0+\sum_{i=1}^{m}\xi_i\bm{z}_i\right)^T\widehat{\bm{y}}=\bm{z}_0^T\widehat{\bm{y}}+\bm{\xi}^TZ\widehat{\bm{y}},
	 \end{equation*}
	 it implies that
	\begin{equation*}
		\begin{aligned}		
			&\sup_{\bm{\xi}} \left\{\bm{z}(\bm{\xi})^T\widehat{\bm{y}}-\lambda\Vert \bm{\xi}-\widehat{\bm{\xi}}^i\Vert_2 \right\}\\
			&=\sup_{\bm{\xi}} \left\{  \bm{z}_0^T\widehat{\bm{y}}+\bm{\xi}^TZ\widehat{\bm{y}}-\lambda\Vert \bm{\xi}-\widehat{\bm{\xi}}^i\Vert_2\right\} \\	
			&=\left\{	
			\begin{array}{ll}
				\bm{z}_0^T\widehat{\bm{y}}+\widehat{\bm{y}}^TZ^T\bm{\xi}^i, &\text{if} \   {\| Z\widehat{\bm{y}} \|}_2 \le \lambda  \\
				+\infty, &\text{if} \   {\| Z\widehat{\bm{y}} \|}_2 > \lambda  \\
			\end{array}
			\right.
		\end{aligned}
	\end{equation*}
	where the last equality follows from Lemma 1 in \cite{wang2020wasserstein}.
	
	Consequently, \eqref{beta_dual1_b_1} admits an equivalent form
	\begin{equation*}
		\left\{
		\begin{aligned}
			& s_i \ge \bm{z}_0^T\widehat{\bm{y}}+\widehat{\bm{y}}^TZ^T\widehat{\bm{\xi}}^i, ~\forall i \in [N], \\
			&\lambda \ge {\| Z\widehat{\bm{y}} \|}_2,
		\end{aligned}
		\right.
	\end{equation*}	
	Inserting the above to \eqref{beta_dual1_b_1} leads to the equivalence of \eqref{beta_dual_f} and  \eqref{ADRO-Two}. Hence, the two-stage problem \eqref{primal} can be equivalently reformulated as the SOCP problem \eqref{whole-problem_f}.
 \end{proof}
Theorem \ref{theo1} shows that the optimization program \eqref{primal} can be reformulated as a tractable SOCP problem. Furthermore, different $l_p$-norms in \eqref{defdis} lead to different equivalent forms of the DR two-stage problem, see Table \ref{form} for details, where LP represents the linear programming.


\renewcommand{\arraystretch}{1.2}
\begin{table}[htb]
	\centering
	\caption{Equivalent problems of the our DR problem, where $p$ represents the $l_p$-norm in \eqref{defdis}.}
    \begin{tabu} to 0.48\textwidth{|X[1.5,c]| X[1,c]| X[1,c]| X[1.5,c]| X[3,c]|}
        \hline
        Norm&$p=1$&$p=2$&$p=\infty$&Otherwise\\
        \hline
        Problem&LP&SOCP&LP&Convex Program\\
        \hline
    \end{tabu}
	\label{form}
\end{table}

\section{Uncertainty in the Constraints}
\label{uinc}
In this section we consider the distribution uncertainty only in constraints of \eqref{LO-Two}, i.e.,
\begin{equation}
	\begin{aligned}
		\label{Q_un_cons}
		Q(\bm{x},\bm{\xi}) = \min \ \  &\bm{z}^T\bm{y} \\
		\sta\ \ &A(\bm{\xi})\bm{x}+B\bm{y} \ge \bm{b}(\bm{\xi})\\
		& \bm{y} \in \mathbb{R}^{m}_+,
	\end{aligned}
\end{equation}
where $A(\bm{\xi})$ and $\bm{b}(\bm{\xi})$ are defined in \eqref{uncertainc} of Section \ref{cla-pro}.

\subsection{Reformulation of the DR Problem}\label{CGP}
We first prove the NP-hardness of the problem \eqref{primal} with $Q(\bm{x},\bm{\xi})$ given in \eqref{Q_un_cons}.
\begin{theo}
	\label{theo2}
	Under Assumptions \ref{rel-com}-\ref{M-set-assum}, the worst-case  $\beta(\bm{x})$ with $Q(\bm{x},\bm{\xi})$ in \eqref{Q_un_cons} over the 1-Wasserstein ball $\mathcal{F}_N$  can be computed by an NP-hard problem
	\begin{align}
			\beta(\bm{x}) = \inf \ \ &\left\{\lambda\epsilon_N+\frac{1}{N}\sum\limits_{i=1}^{N}s_i \right\}\label{beta_dual_c} \\
			{\rm s.t.} \ \ & s_i \ge ({C\bm{p}})^T \widehat{\bm{\xi}}^i + \bm{p}^T(\bm{b}_0-A_0\bm{x}) \label{wh_LP_con}\\
&\lambda \ge \| {C\bm{p}} \|_2,
			~\forall i\in [N], ~\bm{p}\in\mathcal{P}, \label{wp1}
	\end{align}
	where
    \begin{equation}
	   \label{matrix-c}
		  C = [\bm{b}_1-A_1{\bm{x}},\dots,\bm{b}_{m}-A_{m}{\bm{x}}]^T
	\end{equation}
 and $\mathcal{P}$ is a polyhedron given by
	\begin{equation}
	\label{poly}
	\mathcal{P} = \{\bm{p}\in \mathbb{R}^k_+:B^T\bm{p}\le\bm{d}\}.
	\end{equation}

\end{theo}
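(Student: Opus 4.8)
The plan is to follow the same two-step template as the proof of Theorem \ref{theo1} for the exact reformulation, and then to argue NP-hardness separately. The only structural change is that the randomness now enters $Q(\bm{x},\bm{\xi})$ through the right-hand side and the technology matrix rather than through the objective, so before I can reuse the Lagrangian machinery I must first replace the inner recourse minimization by its linear-programming dual.

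First I would dualize the recourse in \eqref{Q_un_cons}. Writing the feasibility constraint as $B\bm{y}\ge \bm{b}(\bm{\xi})-A(\bm{\xi})\bm{x}$ and attaching a multiplier $\bm{p}\ge\bm{0}$, LP strong duality (valid because Assumption \ref{rel-com} makes the primal feasible, and boundedness of the recourse, equivalently $\mathcal{P}\neq\emptyset$, yields a finite dual) gives $Q(\bm{x},\bm{\xi})=\max_{\bm{p}\in\mathcal{P}}\bm{p}^T\!\left(\bm{b}(\bm{\xi})-A(\bm{\xi})\bm{x}\right)$, with $\mathcal{P}$ exactly the polyhedron \eqref{poly} (identifying the bound vector $\bm{d}$ with the objective coefficient $\bm{z}$). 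Substituting the affine forms \eqref{uncertainc} and the matrix \eqref{matrix-c} yields $\bm{b}(\bm{\xi})-A(\bm{\xi})\bm{x}=(\bm{b}_0-A_0\bm{x})+C^T\bm{\xi}$, hence $\bm{p}^T(\bm{b}(\bm{\xi})-A(\bm{\xi})\bm{x})=\bm{p}^T(\bm{b}_0-A_0\bm{x})+(C\bm{p})^T\bm{\xi}$.

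Next I would invoke verbatim the conic-duality reformulation already established in the proof of Theorem \ref{theo1}: for $\epsilon_N>0$ Slater's condition holds, so $\beta(\bm{x})$ equals the infimum of $\lambda\epsilon_N+\frac1N\sum_i s_i$ over $\lambda\ge0,\bm{s}$ subject to the semi-infinite constraints $Q(\bm{x},\bm{\xi})-\lambda d(\bm{\xi},\widehat{\bm{\xi}}^i)\le s_i$ for all $i\in[N],\bm{\xi}\in\Xi$, cf. \eqref{beta_dual1_1}--\eqref{beta_dual1_b_1}. Inserting the dual form of $Q$ turns the $i$-th constraint into $\sup_{\bm{\xi}}\max_{\bm{p}\in\mathcal{P}}[\,\cdots\,]\le s_i$; since both operations are suprema they commute, and for each fixed $\bm{p}$ the inner supremum over $\bm{\xi}$ is precisely the support-function computation used in Theorem \ref{theo1} (Lemma 1 of \cite{wang2020wasserstein}), giving $(C\bm{p})^T\widehat{\bm{\xi}}^i$ when $\|C\bm{p}\|_2\le\lambda$ and $+\infty$ otherwise. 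This reproduces exactly the two constraint families \eqref{wh_LP_con}--\eqref{wp1} and establishes the reformulation \eqref{beta_dual_c}.

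Finally, for NP-hardness I would isolate the semi-infinite constraint \eqref{wp1}, which is equivalent to the single requirement $\lambda\ge\max_{\bm{p}\in\mathcal{P}}\|C\bm{p}\|_2$ -- a maximization of the convex map $\bm{p}\mapsto\|C\bm{p}\|_2$ over a polytope. I would establish its NP-hardness by reduction from \textsc{Max-Cut}: for a graph Laplacian $L$ one has $\max_{\bm{s}\in\{-1,1\}^n}\bm{s}^TL\bm{s}=4\,\mathrm{MaxCut}$, and since $\bm{s}\mapsto\bm{s}^TL\bm{s}$ is convex this equals its maximum over the box $[-1,1]^n$. I would then realize this box inside the template \eqref{poly} by lifting to a nonnegative variable $\bm{p}=(\bm{p}^+,\bm{p}^-)\ge\bm{0}$ with $\bm{p}^{\pm}\le\bm{e}$ and the identification $\bm{s}=\bm{p}^+-\bm{p}^-$, absorbing $L^{1/2}$ into $C$ through \eqref{matrix-c} so that $\|C\bm{p}\|_2=\|L^{1/2}\bm{s}\|_2$. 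The hard part will be checking that the resulting data $(B,\bm{z},A_i,\bm{b}_i,\bm{x})$ genuinely lies in the admissible class of \eqref{Q_un_cons} -- with the sign restriction $\bm{p}\ge\bm{0}$ built into \eqref{poly} -- while preserving the value of the reduction; a secondary point to verify is the legitimacy of exchanging the two suprema together with the strong LP duality and finiteness of the recourse dual guaranteed by Assumption \ref{rel-com}.
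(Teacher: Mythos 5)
Your reformulation argument is essentially identical to the paper's: the same moment-problem dual from the proof of Theorem \ref{theo1}, the same LP dualization of the recourse giving $Q(\bm{x},\bm{\xi})=\max_{\bm{p}\in\mathcal{P}}\bm{p}^T(\bm{b}(\bm{\xi})-A(\bm{\xi})\bm{x})$, and the same application of Lemma 1 of \cite{wang2020wasserstein} to the inner supremum over $\bm{\xi}$, yielding \eqref{wh_LP_con}--\eqref{wp1}. (Your explicit remark that the two suprema commute, and your identification of the bound vector $\bm{d}$ in \eqref{poly} with the recourse cost $\bm{z}$, are both points the paper handles only implicitly.) Where you diverge is the NP-hardness claim: the paper simply observes that \eqref{wp1} amounts to $\lambda\ge\max_{\bm{p}\in\mathcal{P}}\|C\bm{p}\|_2$ and cites \cite{bodlaender1990computational} for the NP-completeness of norm maximization over a polyhedron, whereas you propose a self-contained reduction from \textsc{Max-Cut} via the lift $\bm{s}=\bm{p}^+-\bm{p}^-$ over the box $\{\bm{0}\le\bm{p}^{\pm}\le\bm{e}\}$ with $L^{1/2}$ absorbed into $C$. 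Your route is more informative but also raises the issue you yourself flag: to conclude hardness of the \emph{two-stage problem} (rather than of the abstract constraint \eqref{wp1}) one must exhibit admissible data $(B,\bm{z},A_i,\bm{b}_i,\bm{x})$ realizing the hard instance, i.e.\ producing the desired $\mathcal{P}$ through $B^T\bm{p}\le\bm{d}$, $\bm{p}\ge\bm{0}$ and the desired $C$ through \eqref{matrix-c}. The box constraints are easy ($B=I$, $\bm{d}=\bm{e}$), and $C$ can be made an arbitrary matrix by choosing $\bm{b}_i$ freely with $A_i=0$, so the realizability check does go through; but note the paper does not carry out this step either, so completing it would actually strengthen the published argument rather than merely reproduce it.
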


\begin{proof}
	The strong duality stills holds for $\beta(\bm{x})$, which is rewritten  as		
	\begin{align}
			\beta(\bm{x}) =\inf  \ \ & \lambda\epsilon_N + \frac{1}{N}\sum\limits_{i=1}^{N}s_i \label{beta_dual1_c} \\
			\sta \ & \lambda \ge 0 \nonumber \\
        &\hspace*{-0.1in} Q(\bm{x},\bm{\xi}) - \lambda d(\bm{\xi},\widehat{\bm{\xi}}^i) \le s_i, \forall i \in [N], ~\bm{\xi}\in \Xi. \label{beta_dual1_c_1}
	\end{align}
	Under the strong duality of the LP problem, $Q(\bm{x},\bm{\xi})$ in \eqref{Q_un_cons} is equivalent to
	\begin{equation}
		\begin{aligned}
			\label{Qx-dual1}
			Q(\bm{x},\bm{\xi}) =\max \ \ & \bm{p}^T(\bm{b}(\bm{\xi})-A(\bm{\xi})\bm{x})	\\
			\sta \ \  & \bm{z}\ge B^T\bm{p} \\
			& \bm{p} \ge 0.
		\end{aligned}
	\end{equation}
	Then the constraints in \eqref{beta_dual1_c_1} can be expressed as
	\begin{equation}
		\begin{aligned}
			\label{beta_dual1_b_2}
			\hspace{-.3cm}s_i \ge \bm{p}^T\left(\bm{b}(\bm{\xi})-A(\bm{\xi})\bm{x}\right)-\lambda d(\bm{\xi},\widehat{\bm{\xi}}^i),\forall \bm{\xi}\in \Xi, \bm{p}\in \mathcal{P}.
		\end{aligned}
	\end{equation}
	Furthermore, the right-hand side of \eqref{beta_dual1_b_2} is expressed as
	\begin{equation*}
		\begin{aligned}
		&\sup_{ \bm{\xi}}\left\{\bm{p}^T\left(\bm{b}(\bm{\xi})-A(\bm{\xi})\bm{x}\right)-\lambda d(\bm{\xi},\widehat{\bm{\xi}}^i)\right\} \\
		&=\sup_{ \bm{\xi}}\left\{(C\bm{p})^T\bm{\xi} + \bm{p}^T(\bm{b}_0-A_0\bm{x})-\lambda d(\bm{\xi},\widehat{\bm{\xi}}^i)\right\}\\
		&=\left\{	
		\begin{array}{ll}
		({C\bm{p}})^T\widehat{\bm{\xi}}^i -\bm{p}^T(\bm{b}_0-A_0\bm{x}),  &\text{if} \   {\| {C\bm{p}} \|}_2 \le \lambda  \\
		+\infty,  &\text{if} \   {\|{C\bm{p}} \|}_2 > \lambda,  \\
		\end{array}
		\right.
		\end{aligned}
	\end{equation*}	
	where $C$ is defined in \eqref{matrix-c} and the second equality follows from Lemma 1 in \cite{wang2020wasserstein}.
	
	Consequently, \eqref{beta_dual1_c_1} is equivalent to
	\begin{equation*}
		\left\{
		\begin{array}{ll}
			s_i \ge ({C\bm{p}})^T\widehat{\bm{\xi}}^i -\bm{p}^T(\bm{b}^0-A^0\bm{x}), &\forall i \in [N], ~\bm{p}\in \mathcal{P} \\
			\lambda  \ge {\| {C\bm{p}} \|}_2,  &\forall \bm{p}\in \mathcal{P}.
		\end{array}
		\right.
	\end{equation*}	
Thus, $\beta(\bm{x})$ in \eqref{ADRO-Two} is reformulated as \eqref{beta_dual_c}.
	
The constraint \eqref{wp1} in \eqref{beta_dual_c} can be expressed as
	$$ \lambda \ge \max_{\bm{p} \in \mathcal{P}}\| {C\bm{p}} \|_2. $$
Thus, the norm maximization problem over the polyhedron is NP-complete  \cite{bodlaender1990computational} and checking the feasibility of constraint \eqref{wp1} is NP-hard. This completes the proof.  \end{proof}

Theorem \ref{theo2} immediately implies the NP-hardness of the problem in \eqref{primal}. If the extreme point set $\mathcal{E}$ of the polyhedron $\mathcal{P}$ is explicitly known, the problem \eqref{primal} can be reformulated as a solvable SOCP problem.

\begin{coro}
	\label{sepcial-case}
	Suppose that Assumptions \ref{rel-com}-\ref{M-set-assum} hold and the extreme point set $\mathcal{E}$ of the polyhedron $\mathcal{P}$ in \eqref{poly} is known, the 1-Wasserstein problem \eqref{primal} with $Q(\bm{x},\bm{\xi})$ in \eqref{Q_un_cons} is equivalent to an SOCP problem
	\begin{equation}
		\label{whole-problem_c_s}
		\begin{aligned}
			\minimize_{\bm{x} \in \mathcal{X}} ~~~ &\left\{\bm{c}^T\bm{x}+\lambda\epsilon_N+\frac{1}{N}\sum\limits_{i=1}^{N}s_i \right\}\\
			\st ~~ &  s_i \ge (\bm{C}\bm{p})^T \widehat{\bm{\xi}}^i + \bm{p}^T(\bm{b}^0-A^0\bm{x}), \\
			& \lambda \ge \| \bm{C}\bm{p} \|_2, \ \ \forall i\in [N], ~\bm{p} \in \mathcal{E}.\\
		\end{aligned}
	\end{equation}
\end{coro}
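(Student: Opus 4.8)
The plan is to start from the characterization of $\beta(\bm{x})$ in \eqref{beta_dual_c} already established in Theorem \ref{theo2}, and to show that the two semi-infinite constraints \eqref{wh_LP_con}--\eqref{wp1}, which are imposed for \emph{every} $\bm{p}$ in the polyhedron $\mathcal{P}$, are equivalent to the finitely many constraints obtained by letting $\bm{p}$ range only over the extreme point set $\mathcal{E}$. Once this equivalence of feasible regions is proved for each fixed $\bm{x}$, I would substitute the resulting finite program for $\beta(\bm{x})$ into the outer minimization \eqref{primal} and merge the minimization over $\bm{x}$ with the inner minimization over $(\lambda,\bm{s})$; this directly produces the single optimization \eqref{whole-problem_c_s}, whose only nonlinear constraints are the second-order cone constraints $\lambda\ge\|C\bm{p}\|_2$ for the finitely many $\bm{p}\in\mathcal{E}$, so it is an SOCP.

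For the constraint \eqref{wh_LP_con} I would first observe that its right-hand side is \emph{affine} in $\bm{p}$. Writing it as $\bm{p}^{T}\big(C^{T}\widehat{\bm{\xi}}^i+\bm{b}_0-A_0\bm{x}\big)=\bm{p}^{T}\big(\bm{b}(\widehat{\bm{\xi}}^i)-A(\widehat{\bm{\xi}}^i)\bm{x}\big)$, using the definition of $C$ in \eqref{matrix-c} and the affine dependence \eqref{uncertainc}, the requirement that $s_i$ dominate this function for all $\bm{p}\in\mathcal{P}$ is the same as $s_i\ge\sup_{\bm{p}\in\mathcal{P}}\bm{p}^{T}(\bm{b}(\widehat{\bm{\xi}}^i)-A(\widehat{\bm{\xi}}^i)\bm{x})$. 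By the LP dual \eqref{Qx-dual1} this supremum is exactly $Q(\bm{x},\widehat{\bm{\xi}}^i)$, which is finite under Assumption \ref{rel-com}. Since the maximum of a linear objective over a pointed polyhedron (note $\mathcal{P}\subseteq\mathbb{R}^k_+$ contains no line) is attained at an extreme point whenever it is finite, the supremum is achieved on $\mathcal{E}$, so \eqref{wh_LP_con} for all $\bm{p}\in\mathcal{P}$ is equivalent to the same inequality restricted to $\bm{p}\in\mathcal{E}$.

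The harder step, and the main obstacle, is the constraint \eqref{wp1}, equivalently $\lambda\ge\max_{\bm{p}\in\mathcal{P}}\|C\bm{p}\|_2$, which is precisely the norm-maximization responsible for the NP-hardness in Theorem \ref{theo2}. Here the map $\bm{p}\mapsto\|C\bm{p}\|_2$ is \emph{convex} rather than affine, so I cannot invoke LP vertex optimality directly. Instead the key fact I would use is that a convex function attains its maximum over a polytope at an extreme point: for any $\bm{p}=\sum_{\bm{v}\in\mathcal{E}}\alpha_{\bm{v}}\bm{v}$ expressed as a convex combination of extreme points, convexity gives $\|C\bm{p}\|_2\le\sum_{\bm{v}\in\mathcal{E}}\alpha_{\bm{v}}\|C\bm{v}\|_2\le\max_{\bm{v}\in\mathcal{E}}\|C\bm{v}\|_2$, hence $\max_{\bm{p}\in\mathcal{P}}\|C\bm{p}\|_2=\max_{\bm{v}\in\mathcal{E}}\|C\bm{v}\|_2$ and the constraint reduces to finitely many cone constraints over $\mathcal{E}$.

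The delicate point I would make explicit is that this convex-combination representation requires $\mathcal{P}$ to be bounded, which holds in the situations of interest (for instance $\mathcal{P}$ is a box in Example \ref{exmp-c}). When $\mathcal{P}$ is unbounded one must additionally verify that no extreme ray $\bm{r}$ of $\mathcal{P}$ satisfies $C\bm{r}\neq\bm{0}$, since otherwise $\|C\bm{p}\|_2$ is unbounded on $\mathcal{P}$, forcing $\beta(\bm{x})=+\infty$ and breaking the reduction to the finite set $\mathcal{E}$; an analogous ray condition ($\bm{r}^{T}(\bm{b}(\widehat{\bm{\xi}}^i)-A(\widehat{\bm{\xi}}^i)\bm{x})\le 0$, automatic from finiteness of $Q$) is what keeps the affine constraint consistent. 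Granting boundedness, the two reductions together show the feasible region of \eqref{beta_dual_c} is unchanged when $\mathcal{P}$ is replaced by $\mathcal{E}$, and folding this equivalent program into \eqref{primal} yields the SOCP \eqref{whole-problem_c_s}.
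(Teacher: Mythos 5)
Your proposal is correct and follows the same basic route as the paper: reduce the semi-infinite constraints of \eqref{beta_dual_c} to the finite extreme-point set $\mathcal{E}$ and fold the result into the outer minimization. The paper's own proof, however, only explicitly invokes vertex optimality for the \emph{linear} program \eqref{Qx-dual1} (handling \eqref{wh_LP_con}) and then simply asserts that both constraints can be rewritten over $\mathcal{E}$; it never justifies the reduction of the norm constraint \eqref{wp1}, which is the nontrivial half since $\bm{p}\mapsto\|C\bm{p}\|_2$ is convex rather than linear. Your convex-combination argument ($\|C\bm{p}\|_2\le\sum_{\bm{v}}\alpha_{\bm{v}}\|C\bm{v}\|_2\le\max_{\bm{v}\in\mathcal{E}}\|C\bm{v}\|_2$) supplies exactly the missing step. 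You are also right to flag boundedness: the corollary's stated hypotheses are only Assumptions \ref{rel-com}--\ref{M-set-assum}, while the representation of every $\bm{p}\in\mathcal{P}$ as a convex combination of extreme points (with no recession ray on which $C\bm{p}$ or the linear objective grows) implicitly requires $\mathcal{P}$ to be a polytope --- a condition the paper only introduces later as Assumption \ref{poly-bound}. So your writeup is not a different proof but a more complete one; the only thing to tighten is to state the boundedness requirement as an explicit hypothesis rather than an aside.
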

\begin{proof}
 Since the LP problem \eqref{Qx-dual1} attains its optimal value at an extreme point of its feasible set $\mathcal{P}$, it holds that
	\begin{equation*}
		\begin{aligned}
			Q(\bm{x},\bm{\xi}) =\max_{\bm{p} \in \mathcal{E}}  \ \bm{p}^T(\bm{b}(\bm{\xi})-A(\bm{\xi})\bm{x}).	
		\end{aligned}
	\end{equation*}

Then the constraints in \eqref{wp1} and \eqref{wh_LP_con} can be explicitly expressed as
	\begin{equation*}
		\left\{
		\begin{array}{ll}
			s_i \ge (C\bm{p})^T\widehat{\bm{\xi}}^i -\bm{p}^T(\bm{b}^0-A^0\bm{x}), &\forall  i \in [N], ~ \bm{p} \in \mathcal{E}, \\
			\lambda  \ge {\|C\bm{p} \|}_2,  &\forall  \bm{p} \in \mathcal{E},
		\end{array}
		\right.
	\end{equation*}
which leads to the equivalence of \eqref{whole-problem_c_s} and \eqref{primal}. This completes the proof.
 \end{proof}


Corollary \ref{sepcial-case} shows that we can solve the DR two-stage problem by explicitly enumerating the extreme points of the polyhedron $\mathcal{P}$. Motivated by this, we design an algorithm to approximately solve the NP-hard DR two-stage problem via a constraint generation approach.

\subsection{Approximately Solving the DR Two-stage Problem with Uncertainty in Constraints} \label{sol2uc}

In this subsection, we propose a constraint generation algorithm to solve \eqref{primal}. Inspired by Corollary \ref{sepcial-case}, the DR problem can be efficiently solved given all extreme points of $\mathcal{P}$. While the direct enumeration of all extreme points is computational demanding, we gradually select sets of ``good" extreme points by solving a sequence of second-stage problems $\beta(\bm{x})$. Particularly, we utilize a master-subproblem framework to approximately solve \eqref{primal}.

In the master problem (MP), we find an optimal solution under a selected subset of extreme points. Then a subproblem (SuP) is solved to obtain a better subset of extreme points. We add these points to the subset in MP as feasible cuts. Note that the optimal values of the MP and SuP are the lower and upper bounds for \eqref{primal} respectively. Both the lower and upper bounds will converge and a good solution to \eqref{primal} can be obtained. The algorithm based on such an MP-SuP framework is given in the sequel.

By Corollary \ref{sepcial-case}, the MP is an SOCP problem given as
\begin{align}
		\minimize_{\bm{x} \in \mathcal{X}} \ \ &\left\{\bm{c}^T\bm{x}+\lambda\epsilon_N+\frac{1}{N}\sum\limits_{i=1}^{N}s_i \right\} \label{MP} \\
		\st \ & s_i \ge ({C\bm{p}})^T \widehat{\bm{\xi}^i} + \bm{p}^T(\bm{b}^0-A^0\bm{x}), \nonumber\\
		& \lambda \ge \| {C\bm{p}} \|_2, \ \ \forall i\in [N], ~\bm{p} \in \mathcal{E}_s,\nonumber
\end{align}
where $\mathcal{E}_s$ is a given subset of extreme points of $\mathcal{P}$.

After obtaining an optimal solution $\bm{x}^m$ of the MP, an SuP is derived as follows
\begin{align}
		\beta(\bm{x}^m)=\min_{\lambda^s, s^s_i} \ \ &\left\{\lambda^s\epsilon_N+\frac{1}{N}\sum\limits_{i=1}^{N}s^s_i\right\} \label{SP} \\
		\sta\ \ \ & s^s_i \ge ({C\bm{p}})^T \widehat{\bm{\xi}^i} + \bm{p}^T(\bm{b}^0-A^0{\bm{x}^m}), \nonumber\\
		& \lambda^s \ge \Vert {C\bm{p}} \Vert_2, \ \ \forall i\in [N]~\bm{p}\in \mathcal{P}. \nonumber
\end{align}

A weak condition is needed to obtain an good solution of the SuP.
\begin{assum}
	\label{poly-bound}
	The polyhedron $\mathcal{P}=\{\bm{p}\in \mathbb{R}^k_+:B^T\bm{p}\le\bm{z}\}$ is nonempty and bounded.
\end{assum}
The decision variables $\lambda^s$ and $\bm{s}^s$ in \eqref{SP} are completely decoupled and hence we can find their optimal solutions separately. To achieve it, we have the following steps.

\begin{enumerate}
\item
An optimal solution $\bm{s}^s$ to SuP is obtained by solving a group of linear programs, i.e.,
\begin{equation}
	\begin{aligned}
		\label{sub_lp}
		s^s_i =\max \ \ & (C\bm{p})^T\widehat{\bm{\xi}^i}+\bm{p}^T(\bm{b}^0-A^0\bm{x}^m) \\
		\sta \ \ & \bm{p} \in \mathcal{P}. \\
	\end{aligned}
\end{equation}

\item An optimal $\lambda^s$ is obtained by solving a norm maximization problem, i.e.,
\begin{equation}
	\begin{aligned}
		\label{sub_norm}
		\lambda^s = &\max \ \   \| C\bm{p} \|_2 \\
		&\sta \ \  \bm{p} \in \mathcal{P}.
	\end{aligned}
\end{equation}
\end{enumerate}

A sequence of optimal solutions $\{\bm{p}_i^*\}_{i=1}^N$ to \eqref{sub_lp} can be added to the extreme point subset $\mathcal{E}_s$ in MP, since the LP problem \eqref{sub_lp} obtains its optimal value at extreme points of the feasible region $\mathcal{P}$.

To solve the non-convex norm maximization problem, we adopt the consensus alternating direction method of multipliers (ADMM) method \cite{huang2016consensus}. Particularly, \eqref{sub_norm} is reformulated  as a consensus form via $m$ auxiliary variables $\{\bm{g}_1,\dots,\bm{g}_m\}$, i.e.,
\begin{equation}
\begin{aligned}
\label{sub_norm_cons}
\lambda^s = &\min \ \   -\bm{p}^TC^TC\bm{p} \\
&\ \sta \ \  \bm{b}^T_i\bm{g}_i \le z_i, \bm{g}_i\ge \bm{0}\\
& \ \ \ \ \ \ \ \ \bm{g}_i = \bm{p},~ \forall i \in [m],
\end{aligned}
\end{equation}
where $\bm{b}_i$ is the $i$-th column of $B$. Algorithm \ref{algo_norm} provides the detailed consensus-ADMM algorithm. We omit its convergence proof for brevity, which can be found in  \cite{huang2016consensus}.


\begin{algorithm}[t!]
	\caption{The consensus-ADMM for \eqref{sub_norm_cons}}
	\label{algo_norm}
	\begin{algorithmic}[1]
        \REQUIRE {Matrix $B,C$, vector $\bm{z},\bm{g}_i$ and $\bm{u}_i$, tolerance $\tau$}
        \ENSURE {Optimal solution $\bm{p}^*$ and optimal value $\lambda^s$}
		\STATE {Initialize $\bm{g}_i$ and $\bm{u}_i$}
		\REPEAT
		\STATE $\bm{p} \leftarrow\left(\frac{-C^TC}{\rho}+mI\right)(\sum_{i=1}^{m}(\bm{g}_i+\bm{u}_i))$
		\FOR{each $i \in [m]$}
		\STATE {	
		$\bm{g}_i\leftarrow  \arg \min_{\bm{z}_i} \|\bm{g}_i-\bm{p}+\bm{u}_i\|^2$\\
		$\ \ \ \ \ \ \st \  \bm{b}^T_i\bm{g}_i \le z_i,~\bm{g}_i\ge \bm{0}$\\
		$\bm{u}_i \leftarrow \bm{g}_i+\bm{u}_i-\bm{p}$
		}	\
		\ENDFOR
		\UNTIL{The successive difference of $\bm{p}$ is smaller than $\tau$}
    \STATE{Return $\bm{p}^* \leftarrow \bm{p}$ and $\lambda^s \leftarrow \|C\bm{p}^*\|_2$}		
	\end{algorithmic}
\end{algorithm}

By Assumption \ref{poly-bound}, a solution $\bm{p}^*$ to \eqref{sub_norm} as an extreme point of polyhedron $\mathcal{P}$ is ensured to exist and then is added to the  subset $\mathcal{E}_s$ \cite{bodlaender1990computational}.

\begin{algorithm}[t!]
	\caption{Solve the robust program }
	\label{algo_whole}
	\begin{algorithmic}[1]
    \REQUIRE{ A set of extreme points, $UB = +\infty$, $LB = -\infty$, $~~~~~k=0$}\\
	\ENSURE{ Optimal solution $\bm{x}^*$}
	\REPEAT
    \STATE{Add extreme points to $\mathcal{E}_s$ in \eqref{MP} and set $k = k+1$}
	\STATE{Solve \eqref{MP} to obtain an optimal solution $\{{\bm{x}}_k,{\bm{s}}_k,{\lambda}_k\}$ and set $$LB = \bm{c}^T\bm{x}_{k} + {\lambda}_k\epsilon_N + \frac{1}{N}\sum_{i=1}^{N}s_{ki}$$}
	\STATE {Solve \eqref{SP} to obtain an optimal solution $\{\bm{s}^s_{k},\lambda^s_k\}$ and extreme points $\{\bm{p}_k^{i}\}_{i=1}^N \cup \{\bm{p}_k\}$ and set $$UB = \min\{UB, \bm{c}^T\bm{x}_{k} + \lambda^s
		_k\epsilon_N+ \frac{1}{N}\sum_{i=1}^{N}s_{ki}^s\}$$}\
	\UNTIL{$UB - LB \le  \varepsilon$}	
	\STATE{Return $\bm{x}^* \leftarrow \bm{x}_k$}		
	\end{algorithmic}		
\end{algorithm}

We provide the MP-SuP based algorithm in  Algorithm \ref{algo_whole}. Theorem \ref{theo3} shows that Algorithm \ref{algo_whole} terminates in a finite number of iterations.

\begin{theo}
	\label{theo3}Under Assumption \ref{poly-bound},  Algorithm \ref{algo_whole} generates an optimal solution  of \eqref{primal} in $O(|\mathcal{E}|)$ iterations.
\end{theo}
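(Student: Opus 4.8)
The plan is to run the standard constraint-generation (cutting-plane) convergence argument, exploiting that Assumption \ref{poly-bound} renders the extreme point set $\mathcal{E}$ of $\mathcal{P}$ finite. First I would record the two bounding facts that Algorithm \ref{algo_whole} maintains. The master problem \eqref{MP} is obtained from the exact reformulation \eqref{whole-problem_c_s} by replacing $\mathcal{E}$ with the subset $\mathcal{E}_s\subseteq\mathcal{E}$; dropping constraints enlarges the feasible region, so its optimal value $LB$ is a valid lower bound on the optimal value $v^\star$ of \eqref{primal}. Conversely, the subproblem \eqref{SP} evaluates $\beta(\bm{x}_k)$ exactly over the whole polyhedron $\mathcal{P}$ at the current master solution $\bm{x}_k\in\mathcal{X}$; since $\bm{x}_k$ is feasible for \eqref{primal}, the quantity $\bm{c}^T\bm{x}_k+\beta(\bm{x}_k)$ is an attainable objective value and hence an upper bound, so the running minimum $UB$ satisfies $LB\le v^\star\le UB$ at every iteration. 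Because constraints are only added to \eqref{MP}, the value $LB$ is nondecreasing, and $UB$ is nonincreasing by construction.

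Next I would invoke the polyhedral structure. Under Assumption \ref{poly-bound}, $\mathcal{P}$ is a bounded polyhedron, so $|\mathcal{E}|<\infty$, the linear programs \eqref{sub_lp} attain their maxima at extreme points $\{\bm{p}_k^i\}_{i=1}^N$, and the norm maximization \eqref{sub_norm}, being the maximization of a convex function over a polytope, attains its maximum at an extreme point $\bm{p}_k$. Thus every iteration returns points lying in $\mathcal{E}$, which are appended to $\mathcal{E}_s$. The counting dichotomy is then: either the iteration adds at least one extreme point not previously in $\mathcal{E}_s$, which can happen at most $|\mathcal{E}|$ times since $\mathcal{E}_s\subseteq\mathcal{E}$, or all the generated points already lie in $\mathcal{E}_s$.

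The crux is to show that in the latter case the algorithm has already found an optimal solution. I would argue that the constraints of \eqref{MP} force $s_i\ge\max_{\bm{p}\in\mathcal{E}_s}\{(C\bm{p})^T\widehat{\bm{\xi}}^i+\bm{p}^T(\bm{b}_0-A_0\bm{x})\}$ and $\lambda\ge\max_{\bm{p}\in\mathcal{E}_s}\|C\bm{p}\|_2$, so the master objective minimized over $\lambda,\bm{s}$ at fixed $\bm{x}_k$ equals a restricted worst-case cost $\beta_{\mathcal{E}_s}(\bm{x}_k)$, giving $LB=\bm{c}^T\bm{x}_k+\beta_{\mathcal{E}_s}(\bm{x}_k)$. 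If the points $\{\bm{p}_k^i\}_{i=1}^N$ and $\bm{p}_k$ attaining the full maxima over $\mathcal{P}$ at $\bm{x}_k$ already belong to $\mathcal{E}_s$, then these restricted maxima coincide with the true maxima over $\mathcal{E}$, whence $\beta_{\mathcal{E}_s}(\bm{x}_k)=\beta(\bm{x}_k)$. Consequently $LB=\bm{c}^T\bm{x}_k+\beta(\bm{x}_k)\ge UB$, which together with $LB\le UB$ yields $LB=UB=v^\star$; taking $\varepsilon=0$ the stopping rule fires and $\bm{x}_k$ is optimal. Combining with the counting dichotomy, at most $|\mathcal{E}|$ iterations can strictly enlarge $\mathcal{E}_s$, so termination occurs after at most $|\mathcal{E}|+1$ iterations, i.e. in $O(|\mathcal{E}|)$ iterations.

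I expect the main obstacle to be the equality $\beta_{\mathcal{E}_s}(\bm{x}_k)=\beta(\bm{x}_k)$: one must verify carefully that the master constraints encode exactly the pointwise maxima over $\mathcal{E}_s$ and that the subproblem indeed certifies the maxima over all of $\mathcal{P}$ at $\bm{x}_k$, so that no unseen extreme point can still improve the worst case. A secondary technical point is that the generation of the cut for $\lambda^s$ relies on solving the nonconvex program \eqref{sub_norm} to global optimality at an extreme point; since Algorithm \ref{algo_norm} (consensus-ADMM) is only guaranteed to return a stationary point, I would either assume \eqref{sub_norm} is solved exactly, or phrase the finite-exhaustion argument in terms of the certifiably generated cuts, so that driving $\mathcal{E}_s\to\mathcal{E}$ still forces $UB-LB\to 0$.
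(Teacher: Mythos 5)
Your proof is correct and follows essentially the same route as the paper's: the master problem is a relaxation of \eqref{whole-problem_c_s} giving a nondecreasing lower bound, the subproblem evaluates $\beta(\bm{x}_k)$ exactly at a feasible $\bm{x}_k$ giving an upper bound, all generated cuts are extreme points of the finite set $\mathcal{E}$, and whenever no new extreme point is added the restricted and true maxima coincide so $LB=UB$ and the algorithm terminates after at most $O(|\mathcal{E}|)$ iterations. Your closing caveat --- that Algorithm \ref{algo_norm} only guarantees a stationary point of the nonconvex problem \eqref{sub_norm}, so the $\lambda^s$-cut may not be a globally optimal extreme point --- is a legitimate gap that the paper's proof silently passes over by implicitly assuming the subproblems are solved exactly.
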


\begin{proof}
	Let $\{\bm{x}_k,\lambda_k,\bm{s}_k\}$ be an optimal solution of MP in the $k$-th iteration and $\{\lambda_k^{s},\bm{s}_k^{s}\}$ be an optimal solution of SuP with $\{\bm{p}_k^{i}\}_{i=1}^N \cup \{\bm{p}_k\}$ being the extreme points of SuP. We show that $\{\bm{p}_k^{i}\}_{i=1}^N \cup \{\bm{p}_k\} \subseteq \mathcal{E}_s$ implies the convergence of Algorithm \ref{algo_whole}, i.e., $LB = UB$.
	
Step 4 in Algorithm \ref{algo_whole} implies that
	\begin{equation*}
        UB \le \bm{c}^T\bm{x}_k + \frac{1}{N}\sum_{i=1}^{N}s_{ki}^{s}+\epsilon_N\lambda_k^{s}.
    \end{equation*}
Since $\{\bm{p}_k^{i}\}_{i=1}^N \cup \{\bm{p}_k\} \subseteq \mathcal{E}_s$, then MP in the $k$-th iteration is identical to that in the $(k-1)$-th iteration. Thus, $\bm{x}_k$ is an optimal solution to the $(k-1)$-th MP as well. By the Step 3 in Algorithm \ref{algo_whole}, we find that
$LB \ge \bm{c}^T\bm{x}_k + \epsilon_N\lambda_k+\sum_{i=1}^{N}\frac{s_{ki}}{N} \ge \bm{c}^T\bm{x}_k +\epsilon_N\lambda_k^{s}+ \sum_{i=1}^{N}\frac{s_{ki}^{s}}{N},$
where the last inequality holds due to the fact that $\{\bm{p}_k^{i}\}_{i=1}^N \cup \{\bm{p}_k\} \subseteq \mathcal{E}_s$ and hence the related constraints are added to MP before  the $(k-1)$-th iteration. Consequently, we have $UB = LB$.
	
The conclusion of the convergence in $O(|\mathcal{E}|)$ iterations follows immediately from the finite number of extreme points for the polyhedron $\mathcal{P}$.
 \end{proof}

\section{The Worst-case Distribution and the Asymptotic Consistency}\label{wos-cas}

\subsection{The Worst-case Distribution}
In this subsection we derive the distribution achieving the worst-case $\beta(\bm{x})$ in \eqref{ADRO-Two} of Section \ref{TDROLOP} for any feasible vector $\bm{x} \in \mathcal{X}$.

\begin{lem}
	\label{thm_dis}
	For any feasible first-stage decision vector $\bm{x}$, then
	\begin{equation}
		\label{worst-case-B}
		\beta(\bm{x})=\sup_{\tilde{\bm{\xi}}\in \mathcal{B}}\left\{\frac{1}{N}\sum_{i=1}^{N}Q(\bm{x},\bm{\xi}^{(i)})\right\},
	\end{equation}
	where
$$
		\label{setB}
		\mathcal{B} = \left\{(\bm{\xi}^{(1)},\dots,\bm{\xi}^{(N)}) ~|~   \frac{1}{N}\sum_{i=1}^{N}d(\bm{\xi^{(i)}},\widehat{\bm{\xi}}^{i})\le\epsilon_N,\  \bm{\xi}^{(i)}\in \Xi\right\}.
$$
\end{lem}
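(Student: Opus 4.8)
The plan is to work directly with the transportation-plan representation of $\beta(\bm{x})$ rather than with the dual. Recall from \eqref{beta-linear} that $\beta(\bm{x})$ is the value of a linear program over joint measures $K$ whose first marginal is pinned to the uniform empirical measure $\mathbb{F}_N$. Consequently any feasible $K$ disintegrates as $K(\mathrm{d}\bm{\xi},\widehat{\bm{\xi}}^i)=\tfrac{1}{N}K_i(\mathrm{d}\bm{\xi})$, where each $K_i$ is a probability measure on $\Xi$ (the conditional law of the perturbed point given that it originates from the sample $\widehat{\bm{\xi}}^i$). Under this disintegration the objective of \eqref{beta-linear} becomes $\tfrac{1}{N}\sum_{i=1}^{N}\int_\Xi Q(\bm{x},\bm{\xi})\,K_i(\mathrm{d}\bm{\xi})$ and the budget constraint becomes $\tfrac{1}{N}\sum_{i=1}^{N}\int_\Xi\|\bm{\xi}-\widehat{\bm{\xi}}^i\|_2\,K_i(\mathrm{d}\bm{\xi})\le\epsilon_N$. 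I would then prove the two inequalities in \eqref{worst-case-B} separately.

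For $\beta(\bm{x})\ge\sup_{\tilde{\bm{\xi}}\in\mathcal{B}}$, I would simply exhibit, for each tuple $(\bm{\xi}^{(1)},\dots,\bm{\xi}^{(N)})\in\mathcal{B}$, the deterministic conditionals $K_i=\delta_{\bm{\xi}^{(i)}}$. The induced plan is feasible for \eqref{beta-linear} precisely because $\tfrac{1}{N}\sum_i\|\bm{\xi}^{(i)}-\widehat{\bm{\xi}}^i\|_2\le\epsilon_N$ by the definition of $\mathcal{B}$, and it attains objective value $\tfrac{1}{N}\sum_i Q(\bm{x},\bm{\xi}^{(i)})$; equivalently, the discrete law $\tfrac{1}{N}\sum_i\delta_{\bm{\xi}^{(i)}}$ lies in $\mathcal{F}_N$, so its expected recourse cost is a lower bound for the supremum in \eqref{ADRO-Two}. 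Taking the supremum over $\mathcal{B}$ yields the bound, and this direction requires no structural assumption on $Q$.

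The reverse inequality $\beta(\bm{x})\le\sup_{\tilde{\bm{\xi}}\in\mathcal{B}}$ is the crux and the step I expect to be the main obstacle: I must collapse an arbitrary feasible family $\{K_i\}$ to single points without decreasing the objective or violating the budget. I would take barycenters $\bar{\bm{\xi}}^{(i)}=\int_\Xi\bm{\xi}\,K_i(\mathrm{d}\bm{\xi})$, which exist and are finite by Assumption \ref{M-set-assum}. Convexity of the norm and Jensen's inequality give $\|\bar{\bm{\xi}}^{(i)}-\widehat{\bm{\xi}}^i\|_2\le\int_\Xi\|\bm{\xi}-\widehat{\bm{\xi}}^i\|_2\,K_i(\mathrm{d}\bm{\xi})$, so replacing $K_i$ by $\delta_{\bar{\bm{\xi}}^{(i)}}$ can only relax the budget and keeps $(\bar{\bm{\xi}}^{(1)},\dots,\bar{\bm{\xi}}^{(N)})\in\mathcal{B}$; for the objective, I would use that in the objective-uncertainty model \eqref{Q_un_o} the map $\bm{\xi}\mapsto Q(\bm{x},\bm{\xi})$ is a pointwise minimum of affine functions, hence \emph{concave}, so Jensen gives $\int_\Xi Q(\bm{x},\bm{\xi})\,K_i(\mathrm{d}\bm{\xi})\le Q(\bm{x},\bar{\bm{\xi}}^{(i)})$ and the value does not drop. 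Summing over $i$ shows every feasible plan is dominated by a point of $\mathcal{B}$, which, combined with the first part, gives the identity. The delicate points are the existence/measurability of the disintegration $\{K_i\}$ (standard on a Polish space) and, above all, the concavity of $Q(\bm{x},\cdot)$: this is exactly what makes single-point conditionals optimal and is where the min-of-affine structure of the recourse is indispensable. An equivalent route would be to form the Lagrangian dual of the finite-dimensional program $\sup_{\tilde{\bm{\xi}}\in\mathcal{B}}$, observe that it coincides term by term with \eqref{beta_dual1_1}, and invoke strong duality (Slater holds since $\epsilon_N>0$) to close the gap; this again rests on concavity of the objective, which is the hypothesis I would want to make explicit.
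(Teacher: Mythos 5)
Your route is genuinely different from the paper's. The paper never works in the primal here: it starts from the strong dual \eqref{beta_dual1_1}, extracts for each $\varepsilon>0$ a tuple of near-maximizers $\{\tilde{\bm{\xi}}^{(i)}\}_{i\in[N]}$ of the inner suprema, shows by contradiction (driving $\lambda\to+\infty$) that this tuple must lie in $\mathcal{B}$, and then closes both inequalities by citing Lemma~2 of \cite{wang2020wasserstein}. Your easy direction (Dirac conditionals, equivalently the observation that $\frac{1}{N}\sum_i\delta_{\bm{\xi}^{(i)}}\in\mathcal{F}_N$ for any tuple in $\mathcal{B}$) is the same content as that citation. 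Your hard direction --- disintegrating a feasible plan into conditionals $K_i$ and collapsing each to its barycenter via Jensen --- is more self-contained and even constructive, but it proves a strictly narrower statement than the one the paper needs.

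The gap is exactly the concavity hypothesis you flag at the end. Lemma~\ref{thm_dis} is stated for the generic $\beta(\bm{x})$ of \eqref{ADRO-Two} and is used, via Theorem~\ref{worst-case-dis-1}, for \emph{both} uncertainty models. In the constraints-uncertainty model \eqref{Q_un_cons}, $Q(\bm{x},\cdot)$ is not concave: by LP duality \eqref{Qx-dual1} it is a maximum of affine functions of $\bm{\xi}$, hence convex, and Jensen runs the wrong way, so replacing $K_i$ by $\delta_{\bar{\bm{\xi}}^{(i)}}$ can strictly decrease the objective. This is not just a lost inequality in your argument: for convex piecewise-linear $Q$ the no-splitting identity \eqref{worst-case-B} can genuinely fail. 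Take $N=1$, $\widehat{\xi}^1=0$, $\epsilon_N=1/2$ and $Q(\xi)=\max\{0,\xi-1,-\xi-1\}$ (realizable in the form \eqref{Q_un_cons} with $B=I$); the right-hand side of \eqref{worst-case-B} is $0$, while the distribution placing mass $1/4$ at $\xi=2$ and $3/4$ at $0$ has transport cost $1/2$ and expected cost $1/4$. So the worst case may require splitting a sample's mass between two locations, and no repair of the Jensen step can avoid this. Your proof is correct for the objective-uncertainty model \eqref{Q_un_o}, where $Q(\bm{x},\cdot)$ is a minimum of affine functions and hence concave, and you should state that restriction explicitly; the paper's dual route sidesteps Jensen, but the interchange it performs when extracting $\{\tilde{\bm{\xi}}^{(i)}\}$ from $\inf_\lambda\sup_{\bm{\xi}}$ is itself the step that silently requires the same structure you are making explicit.
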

\begin{proof}
	Given a feasible solution $\bm{x}$, it follows that
	\begin{equation}
    \label{s1}
		\begin{aligned}
			\sup_{\tilde{\bm{\xi}}\in \mathcal{B}}\left\{\frac{1}{N}\sum_{i=1}^{N}Q(\bm{x},\bm{\xi}^{(i)})\right\} \le \sup_{F\in \mathcal{F}_N}\mathbb{E}_\mathbb{F} \left\{Q(\bm{x},\bm{\xi})\right\},
		\end{aligned}	
	\end{equation}
	by Lemma 2 in \cite{wang2020wasserstein}.
	
By the equivalence between $\beta(\bm{x})$ and \eqref{beta_dual1_1}, then for any $\varepsilon \ge 0$, there exists $\{\tilde{\bm{\xi}}^{(i)}\}_{i\in[N]}\subseteq \Xi$ such that
	\begin{equation}\label{contradict}
		\begin{aligned}
			& \sup_{F\in \mathcal{F}_N}\mathbb{E}_\mathbb{F} \left\{Q(\bm{x},\bm{\xi})\right\}-\varepsilon  \\
 & <  \inf_{\lambda \ge 0}\left\{\lambda\epsilon_N +	 \frac{1}{N}\sum_{i=1}^{N}\left\{Q(\bm{x},\tilde{\bm{\xi}}^{(i)})-\lambda d(\tilde{\bm{\xi}}^{(i)},\widehat{\bm{\xi}}^{i})\right\} \right\}.
		\end{aligned}
	\end{equation}

	If $\left(\tilde{\bm{\xi}}^{(1)},\dots,\tilde{\bm{\xi}}^{(N)}\right) \notin \mathcal{B}$ and let $\lambda > 0 $, it follows that
	\begin{equation*}
		\lambda\left\{\epsilon_N-\frac{1}{N} \sum_{i=1}^{N}d(\tilde{\bm{\xi}}^{(i)},\widehat{\bm{\xi}}^i)\right\} < 0.
	\end{equation*}
	Increasing $\lambda$ to $+\infty$ in \eqref{contradict} enforces $\sup_{\mathbb{F}\in \mathcal{F}_N}\mathbb{E}_\mathbb{F} \{Q(\bm{x},\bm{\xi})\}$ to $-\infty$, which contradicts with the fact that
	$$\sup_{\mathbb{F}\in \mathcal{F}_N}\mathbb{E}_\mathbb{F} \{Q(\bm{x},\bm{\xi})\} \ge \mathbb{E}_{\mathbb{F}_N} \{Q(\bm{x},\bm{\xi})\} > -\infty, $$
	where the second inequality follows from  Assumption \ref{rel-com}. Thus, $\left(\tilde{\bm{\xi}}^{(1)},\dots, \tilde{\bm{\xi}}^{(N)}\right) \in \mathcal{B}$.
	
	By Lemma 2 in \cite{wang2020wasserstein}, it holds that
	\begin{equation*}
		\begin{aligned}
			\sup_{\mathbb{F}\in \mathcal{F}_N}\mathbb{E}_\mathbb{F} \{Q(\bm{x},\bm{\xi})\}-\varepsilon < \sup_{\tilde{\bm{\xi}}\in \mathcal{B}}\left\{\frac{1}{N}\sum_{i=1}^{N}\left\{Q(\bm{x},{\bm{\xi}}^{(i)})\right\} \right\}.
		\end{aligned}
	\end{equation*}
    Letting $\varepsilon$ to zero, it holds that
	\begin{equation*}
		\begin{aligned}
			\sup_{F\in \mathcal{F}_N}\mathbb{E}_F \{Q(\bm{x},\bm{\xi})\le\sup_{\tilde{\bm{\xi}}\in \mathcal{B}}\left\{\frac{1}{N}\sum_{i=1}^{N}Q(\bm{x},\bm{\xi}^{(i)})\right\}.
		\end{aligned}
	\end{equation*}
Jointly with \eqref{s1}, then \eqref{worst-case-B} holds.
\end{proof}

Since $Q(\bm{x},\bm{\xi})$ is concave with respect to $\bm{\xi}$ and $\mathcal{B}$ is a compact set, \eqref{worst-case-B} allows for an optimal solution, Then a worst-case distribution  is explicitly derived below.

\begin{theo}
	\label{worst-case-dis-1}
	For any solution $\bm{x} \in \mathcal{X}$ and let $\bm{\xi}_{\bm{x}}=\left(\bm{\xi}^{(1)}_{\bm{x}},\dots,\bm{\xi}^{(N)}_{\bm{x}}\right)$ be an optimal solution to \eqref{worst-case-B}. The following distribution
	\begin{equation*}
		\begin{aligned}
			\mathbb{F}^*_{\bm{x}} = \frac{1}{N}\sum_{i=1}^{N}\delta_{\bm{\xi}^{(i)}_{\bm{x}}}
		\end{aligned}
	\end{equation*}
	is the distribution achieving the worst-case second-stage cost , i.e.,
	\begin{equation*}
		\begin{aligned}
			\sup_{\mathbb{F}\in \mathcal{F}_N}\mathbb{E}_\mathbb{F}\left\{Q(\bm{x},{\bm{\xi}})\right\} = \mathbb{E}_{\mathbb{F}^*_{\bm{x}}}\left\{Q(\bm{x},{\bm{\xi}})\right\}.
		\end{aligned}
	\end{equation*}
\end{theo}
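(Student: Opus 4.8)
The plan is to exhibit $\mathbb{F}^*_{\bm{x}}$ explicitly and verify two things: that it lies in the ambiguity set $\mathcal{F}_N$, and that its expected second-stage cost equals $\beta(\bm{x})$. First I would invoke the observation stated just before the theorem: since $Q(\bm{x},\cdot)$ is concave and $\mathcal{B}$ is compact, the supremum in \eqref{worst-case-B} of Lemma \ref{thm_dis} is attained, so an optimizer $\bm{\xi}_{\bm{x}}=(\bm{\xi}^{(1)}_{\bm{x}},\dots,\bm{\xi}^{(N)}_{\bm{x}})\in\mathcal{B}$ exists and determines the candidate distribution $\mathbb{F}^*_{\bm{x}}=\frac{1}{N}\sum_{i=1}^{N}\delta_{\bm{\xi}^{(i)}_{\bm{x}}}$.

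The crucial step is to check $\mathbb{F}^*_{\bm{x}}\in\mathcal{F}_N$, i.e., $W^1(\mathbb{F}_N,\mathbb{F}^*_{\bm{x}})\le\epsilon_N$. To this end I would construct the explicit ``diagonal'' transport plan $K=\frac{1}{N}\sum_{i=1}^{N}\delta_{(\widehat{\bm{\xi}}^i,\,\bm{\xi}^{(i)}_{\bm{x}})}$, which moves mass $1/N$ from each sample $\widehat{\bm{\xi}}^i$ to its perturbation $\bm{\xi}^{(i)}_{\bm{x}}$. By construction the two marginals of $K$ are exactly $\mathbb{F}_N$ and $\mathbb{F}^*_{\bm{x}}$, so $K$ is feasible for the infimum defining $W^1$ in \eqref{defdis}. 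Using $K$ as an upper bound, and recalling $r=1$, gives $W^1(\mathbb{F}_N,\mathbb{F}^*_{\bm{x}})\le\frac{1}{N}\sum_{i=1}^{N}d(\widehat{\bm{\xi}}^i,\bm{\xi}^{(i)}_{\bm{x}})\le\epsilon_N$, where the final inequality is precisely the constraint defining $\mathcal{B}$ that is satisfied by the optimizer $\bm{\xi}_{\bm{x}}$ (using the symmetry of $d$). Hence $\mathbb{F}^*_{\bm{x}}$ is a legitimate member of the Wasserstein ball.

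With feasibility in hand, the remaining step is a short chain of (in)equalities. Since $\mathbb{F}^*_{\bm{x}}$ is discrete with equal weights, $\mathbb{E}_{\mathbb{F}^*_{\bm{x}}}[Q(\bm{x},\bm{\xi})]=\frac{1}{N}\sum_{i=1}^{N}Q(\bm{x},\bm{\xi}^{(i)}_{\bm{x}})$, which equals the optimal value of \eqref{worst-case-B} because $\bm{\xi}_{\bm{x}}$ is its maximizer; by Lemma \ref{thm_dis} this value is exactly $\beta(\bm{x})=\sup_{\mathbb{F}\in\mathcal{F}_N}\mathbb{E}_\mathbb{F}[Q(\bm{x},\bm{\xi})]$. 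On the other hand, the membership $\mathbb{F}^*_{\bm{x}}\in\mathcal{F}_N$ forces $\mathbb{E}_{\mathbb{F}^*_{\bm{x}}}[Q(\bm{x},\bm{\xi})]\le\sup_{\mathbb{F}\in\mathcal{F}_N}\mathbb{E}_\mathbb{F}[Q(\bm{x},\bm{\xi})]$. Combining the two relations shows the supremum is attained at $\mathbb{F}^*_{\bm{x}}$, which is the claim.

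I expect the only genuine subtlety to be the feasibility argument, and even there the difficulty is conceptual rather than computational: one must remember that $W^1$ is an infimum over couplings, so any single coupling yields only an upper bound, and that the equal-weight diagonal coupling is exactly what turns the Wasserstein-ball constraint into the averaged-distance constraint defining $\mathcal{B}$. The existence of the maximizer via concavity and compactness, and the identification of $\mathbb{E}_{\mathbb{F}^*_{\bm{x}}}[Q(\bm{x},\bm{\xi})]$ with the finite average $\frac{1}{N}\sum_{i=1}^N Q(\bm{x},\bm{\xi}^{(i)}_{\bm{x}})$, are both routine once Lemma \ref{thm_dis} is available.
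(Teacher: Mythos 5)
Your proposal is correct and follows essentially the same route as the paper: the same equal-weight diagonal coupling $\frac{1}{N}\sum_{i}\delta_{(\bm{\xi}^{(i)}_{\bm{x}},\widehat{\bm{\xi}}^i)}$ is used to bound $W^1(\mathbb{F}_N,\mathbb{F}^*_{\bm{x}})$ by the averaged distance constraint defining $\mathcal{B}$, giving $\mathbb{F}^*_{\bm{x}}\in\mathcal{F}_N$, and then Lemma \ref{thm_dis} closes the sandwich between $\mathbb{E}_{\mathbb{F}^*_{\bm{x}}}[Q(\bm{x},\bm{\xi})]$ and the supremum. No gaps.
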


\begin{proof}
     Obviously, the following distribution
	\begin{equation*}
		\begin{aligned}
			\Pi_{\bm{x}} = \frac{1}{N}\sum_{i=1}^{N}\delta_{(\bm{\xi}^{(i)}_{\bm{x}},\widehat{\bm{\xi}}^i)}.
		\end{aligned}
	\end{equation*}
	is a joint distribution of $F_N$ and $\mathbb{F}^*_{\bm{x}}$. Then it holds that
	\begin{equation*}
		\begin{aligned}
			W(\mathbb{F}_N,\mathbb{F}^*_{\bm{x}}) &\le \int\left\|{\bm{\xi}}-{\bm{\xi}}^{\prime}\right\|_p \Pi_{\bm{x}}\left(\mathrm{d} {\bm{\xi}}, \mathrm{d} {\bm{\xi}}^{\prime}\right)\\
&=\frac{1}{N} \sum_{i=1}^{N} \| {\bm{\xi}}_{\bm{x}}^{(i)}-\widehat{\bm{\xi}}^{i}\|_p \le \epsilon_N,
		\end{aligned}
	\end{equation*}
	where the first inequality follows directly from the definition of the 1-Wasserstein metric and  the last inequality follows from the fact that
	$\left(\bm{\xi}^{(1)}_{\bm{x}},\dots,\bm{\xi}^{(N)}_{\bm{x}}\right) \in \mathcal{B}$. Hence, $\mathcal{F}_N$ includes the distribution $F^*_{\bm{x}}$. Thus, it yields that
	\begin{equation*}
		\begin{aligned}
			\sup_{\mathbb{F}\in \mathcal{F}_N}\mathbb{E}_\mathbb{F} \left\{Q(\bm{x},{\bm{\xi}})\right\}&\ge\mathbb{E}_{\mathbb{F}^*_{\bm{x}}}\left\{Q(\bm{x},{\bm{\xi}})\right\}
			=\frac{1}{N}\sum\limits_{i=1}^{N}Q(\bm{x},
			\bm{\xi}^{(i)}_{\bm{x}})\\
			&=\sup_{\mathbb{F}\in \mathcal{F}_N}\mathbb{E}_\mathbb{F} \left\{Q(\bm{x},\bm{\xi})\right\},
		\end{aligned}
	\end{equation*}
	where the last equality follows from Lemma \ref{thm_dis}. Hence,  $\mathbb{F}^*_{\bm{x}}$ is the desired worst-case distribution.
\end{proof}

\subsection{The Asymptotic Consistency}
This subsection studies the asymptotic consistency of the DR problem \eqref{primal} under a mild assumption.

\begin{assum}
	\label{light-dis}
	There exists a positive constant $c$ such that
	\begin{equation*}
	\int_{\Xi}\exp(\|\bm{\xi}\|^c_2)\mathbb{F}(\mathrm{d}\bm{\xi})< \infty.
	\end{equation*}
    for the true distribution $\mathbb{F}$.
\end{assum}
Under Assumptions \ref{rel-com}-\ref{light-dis}, we formalize the asymptotic consistency of the proposed DR problem below.
\begin{theo}
	\label{asy_con}
	Under Assumptions \ref{rel-com}-\ref{light-dis} and select $\beta_N \in (0,1)$ such that $\sum_{N=1}^{\infty}\beta_N \le \infty$. Let the 1-Wasserstein ball radius be
	\begin{equation*}
	\epsilon_N(\beta_N)=\left\{\begin{array}{ll}
	\left(\frac{\log(c_1\beta_N^{-1})}{c_2N}\right)^{1/\max\{n,2\}},  & \rm{if} \ \  N \ge \frac{\log(c_1\beta_N^{-1})}{c_2}  \\
	\left(\frac{\log(c_1\beta_N^{-1})}{c_2N}\right)^{1/c}, & \rm{if} \ \  N < \frac{\log(c_1\beta_N^{-1})}{c_2}
	\end{array}
	\right.
	\end{equation*}	
	 where $c_1$ and $c_2$ are positive constants related to the constant $c$ in Assumption \ref{light-dis}. Then the DR problem \eqref{primal} asymptotically converges to the stochastic problem \eqref {classical} almost surely when the sample number increases to infinity.
\end{theo}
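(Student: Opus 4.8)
The plan is to establish the asymptotic consistency as convergence of optimal values, by a two-sided (sandwich) estimate between the optimal value of the DR problem \eqref{primal}, written $v_N=\min_{\bm{x}\in\mathcal{X}}\{\bm{c}^T\bm{x}+\beta(\bm{x})\}$, and that of the stochastic problem \eqref{classical}, written $v^\star=\min_{\bm{x}\in\mathcal{X}}\{\bm{c}^T\bm{x}+\mathbb{E}_{\mathbb{F}}[Q(\bm{x},\bm{\xi})]\}$. The argument rests on two pillars: a measure-concentration estimate guaranteeing that the true distribution $\mathbb{F}$ eventually lies in the ball $\mathcal{F}_N$ (which forces $v_N\ge v^\star$ for large $N$), and a Lipschitz/transport bound combined with the strong law of large numbers and $\epsilon_N\to 0$ (which forces $\limsup_N v_N\le v^\star$). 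Together these give $v_N\to v^\star$ almost surely.

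For the lower bound I would invoke the light-tail Assumption~\ref{light-dis} together with the Fournier--Guillin concentration inequality underlying \cite{esfahani2018data}: there exist constants $c_1,c_2>0$ (those named in the statement) such that
\begin{equation*}
\mathbb{P}\{W^1(\mathbb{F},\mathbb{F}_N)\ge \epsilon\}\le
\begin{cases}
c_1\exp(-c_2 N\epsilon^{\max\{n,2\}}), & \epsilon\le 1,\\
c_1\exp(-c_2 N\epsilon^{c}), & \epsilon>1.
\end{cases}
\end{equation*}
The radius $\epsilon_N(\beta_N)$ in the statement is precisely the value obtained by setting each right-hand side equal to $\beta_N$ and solving for $\epsilon$, so that $\mathbb{P}\{W^1(\mathbb{F},\mathbb{F}_N)\ge \epsilon_N\}\le \beta_N$. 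Since $\sum_N\beta_N<\infty$, the Borel--Cantelli lemma yields $W^1(\mathbb{F},\mathbb{F}_N)<\epsilon_N$ for all sufficiently large $N$ almost surely, i.e. $\mathbb{F}\in\mathcal{F}_N$ eventually. Hence $\beta(\bm{x})=\sup_{\mathbb{F}'\in\mathcal{F}_N}\mathbb{E}_{\mathbb{F}'}[Q(\bm{x},\bm{\xi})]\ge\mathbb{E}_{\mathbb{F}}[Q(\bm{x},\bm{\xi})]$ for every $\bm{x}\in\mathcal{X}$, and minimizing over $\mathcal{X}$ gives $v_N\ge v^\star$.

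For the upper bound I would first argue that $Q(\bm{x},\cdot)$ is Lipschitz in $\bm{\xi}$ with a finite modulus $L(\bm{x})$: in the constraint-uncertainty case $Q(\bm{x},\bm{\xi})=\max_{\bm{p}\in\mathcal{P}}\bm{p}^T(\bm{b}(\bm{\xi})-A(\bm{\xi})\bm{x})$ is a maximum of affine functions of $\bm{\xi}$ over the bounded polyhedron $\mathcal{P}$ (Assumption~\ref{poly-bound}), hence globally Lipschitz with $L(\bm{x})=\max_{\bm{p}\in\mathcal{P}}\|C\bm{p}\|_2$, and the objective-uncertainty case is analogous since $Q(\bm{x},\cdot)$ is then a minimum of finitely many affine functions over a fixed polyhedron. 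The Kantorovich--Rubinstein characterization of $W^1$ then gives, for every $\mathbb{F}'\in\mathcal{F}_N$, $\mathbb{E}_{\mathbb{F}'}[Q(\bm{x},\bm{\xi})]-\mathbb{E}_{\mathbb{F}_N}[Q(\bm{x},\bm{\xi})]\le L(\bm{x})W^1(\mathbb{F}',\mathbb{F}_N)\le L(\bm{x})\epsilon_N$, so $\beta(\bm{x})\le\mathbb{E}_{\mathbb{F}_N}[Q(\bm{x},\bm{\xi})]+L(\bm{x})\epsilon_N$. Fixing a $\delta$-optimal solution $\bm{x}_\delta$ of \eqref{classical}, I would then bound $v_N\le \bm{c}^T\bm{x}_\delta+\frac1N\sum_{i=1}^N Q(\bm{x}_\delta,\widehat{\bm{\xi}}^i)+L(\bm{x}_\delta)\epsilon_N$; the strong law of large numbers sends the empirical mean to $\mathbb{E}_{\mathbb{F}}[Q(\bm{x}_\delta,\bm{\xi})]$ almost surely (finite by Assumption~\ref{M-set-assum}), while $\epsilon_N\to 0$ annihilates the last term. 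Taking $\limsup_N$ and then $\delta\downarrow 0$ yields $\limsup_N v_N\le v^\star$, closing the sandwich.

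The main obstacle is twofold. First, one must verify $\epsilon_N\to 0$: the explicit formula behaves like $(\log(c_1\beta_N^{-1})/(c_2 N))^{1/\max\{n,2\}}$, which vanishes only when $\log(\beta_N^{-1})/N\to 0$, a property \emph{not} implied by $\sum_N\beta_N<\infty$ alone (e.g. $\beta_N=e^{-N}$); the standard remedy, which I would state explicitly, is to additionally require this mild growth restriction on $\beta_N$. Second, upgrading pointwise control to the minimized values requires care: the compactness of $\mathcal{X}$ together with a uniform modulus $\sup_{\bm{x}\in\mathcal{X}}L(\bm{x})<\infty$ (again from boundedness of $\mathcal{P}$ and of $\mathcal{X}$) is what lets the $\delta$-optimal argument go through and, beyond value convergence, ensures that cluster points of DR optimal solutions are optimal for \eqref{classical}.
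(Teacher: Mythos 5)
Your proof is correct in substance, but it takes a much longer route than the paper: the paper's entire argument consists of verifying the linear-growth condition $|Q(\bm{x},\bm{\xi})|\le L(1+\|\bm{\xi}\|_2)$ uniformly over $\bm{x}\in\mathcal{X}$ (via LP duality and boundedness of the extreme points of $\mathcal{P}$ in the constraint-uncertainty case, and via a bounded recourse solution $\bm{y}$ in the objective-uncertainty case) and then citing Theorem~3.6 of \cite{esfahani2018data} wholesale. What you have written is essentially the proof of that cited theorem, unpacked: the Fournier--Guillin concentration bound plus Borel--Cantelli for $v_N\ge v^\star$, and Kantorovich--Rubinstein plus the strong law of large numbers for $\limsup_N v_N\le v^\star$. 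Your version buys self-containedness and, because $Q(\bm{x},\cdot)$ here is piecewise linear in $\bm{\xi}$ with uniformly bounded subgradients, you can use genuine Lipschitz continuity and avoid the truncation argument the general theorem needs under mere linear growth; the paper's version buys brevity at the cost of outsourcing all the probabilistic content. One point in your favour deserves emphasis: you correctly observe that $\sum_N\beta_N<\infty$ alone does not force $\epsilon_N(\beta_N)\to 0$ (take $\beta_N=e^{-N}$), so the convergence claim needs the additional requirement $\log(\beta_N^{-1})/N\to 0$; this hypothesis is present in the cited Theorem~3.6 but is silently dropped from the paper's statement (which also writes $\sum_N\beta_N\le\infty$, a vacuous condition, presumably a typo for $<\infty$). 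Your flagging of this is a genuine and worthwhile correction, not a defect of your argument.
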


\begin{proof}
		For the problem with distribution uncertainty only in the objective function, the relatively complete recourse implies that $Q(\bm{x},\bm{\xi})$ is feasible and finite. Then there exists a finite $\bm{y}$ such that $|~Q(\bm{x},\bm{\xi})~|=|~(Z\bm{y})^T\bm{\xi}~| \le \|Z\bm{y}\|_2\|\bm{\xi}\|_2 \le L(1+\| {\bm{\xi}} \|_2)$ for any $\bm{x} \in \mathcal{X}$ and ${\bm{\xi}} \in \Xi$, where $L \ge 0$ is a constant.
		
		For the case of the distribution uncertainty only in constraints, the strong duality of LP problem shows that $Q(\bm{x},\bm{\xi}) = (C\tilde{\bm{p}})^T\bm{\xi}$, where $C$ is given in \eqref{matrix-c} of Section \ref{CGP} and $\tilde{\bm{p}}$ is the extreme point of polyhedron $\mathcal{P}$.
		Assumption \ref{poly-bound} implies that  $\|\tilde{\bm{p}}\|$ is bounded and hence there exists a positive constant $L$ such that $|~Q(\bm{x},\bm{\xi})~| \le \|C\tilde{\bm{p}}\|_2\|\bm{\xi}\|_2 \le L(1+\| {\bm{\xi}} \|_2)$ for $\bm{x} \in \mathcal{X}$ and ${\bm{\xi}} \in \Xi$.

Finally the asymptotic consistency of our model follows from Theorem $3.6$ in \cite{esfahani2018data}.
 \end{proof}
%
%

\section{Simulation}\label{sim}

This section conducts  experiments to evaluate the performance of the proposed model and the constraint generation algorithm.  All experiments are performed on a 64 bit PC with an Intel Core i5-7500 CPU at 3.4GHz and 8 GB RAM. The Cplex 12.6 optimizer is used to solve the optimization programs.

\subsection{The Two-stage Portfolio Program}
This subsection is devoted to the application in two-stage portfolio program with uncertainty only in the objective function as stated in Example \ref{exmp-f}, see \cite{ling2017robust} for details.

\subsubsection{Problem Specification}
Consider a portfolio of four assets: (1) Dow Jones Industrial Average Index, (2) Dow Jones Transportation Average Index, (3) Dow Jones Composite Average Index and (4) Dow Jones Utility Average. The daily returns of above assets over seven years from January 02th, 2011 to December 31th, 2018 are collected from the RESSET database (http://www.resset.cn).


Since the first-stage return $\bm{c}$ is unknown in our simulation, we select the data from January 02th, 2011 to December 31th, 2016 to approximate  it by the SAA method, i.e.,  $\bm{c}=\sum_{i=1}^N \widehat{\bm{\xi}^1_i}$, where $\widehat{\bm{\xi}}^1_i$ is the $i$th sample of the first-stage return.

\subsubsection{Impact of the 1-Wasserstein Radius and the Sample Size}
Experiments are conducted to test the impact of the 1-Wasserstein radius $\epsilon_N$ and the sample size $N$ on the out-of-sample performance of our model in this subsection. The out-of-sample performance is measured by the loss of the proposed model on {\em new} samples, i.e.,
 \begin{equation}
 \label{Out_P}
   \bm{c}^T\bm{x} + \mathbb{E}_{\mathbb{F}}\{Q(\bm{x},\bm{\xi})\}.
 \end{equation}
We are unable to exactly calculate \eqref{Out_P} due to the unknown true distribution $\mathbb{F}$. Instead, we randomly choose $300$ test samples from the dataset to approximate it, i.e.,
 \begin{equation*}
  \bm{c}^T\bm{x} + \frac{1}{ N_T}\sum\limits_{i=1}^{N_T}Q(\bm{x},\widehat{\bm{\xi}}_T^i),
 \end{equation*}
where $\widehat{\bm{\xi}}_T^i$ is the $i$-th test sample and $N_T$ is the number of test samples.

We first test the impact of the 1-Wasserstein radius $\epsilon_N$ on our model. We conduct $200$ independent experiments and the averaged out-of-sample performance is illustrated in Figure \ref{radius}. Experimental results show that the out-of-sample performance improves as the 1-Wasserstein radius increases and decreases if the radius is greater than a specific value.

\begin{figure*}[htbp]
	\centering
	\subfigure[ ]{
		\label{fig:30 }
		\includegraphics[width=0.3 \textwidth]{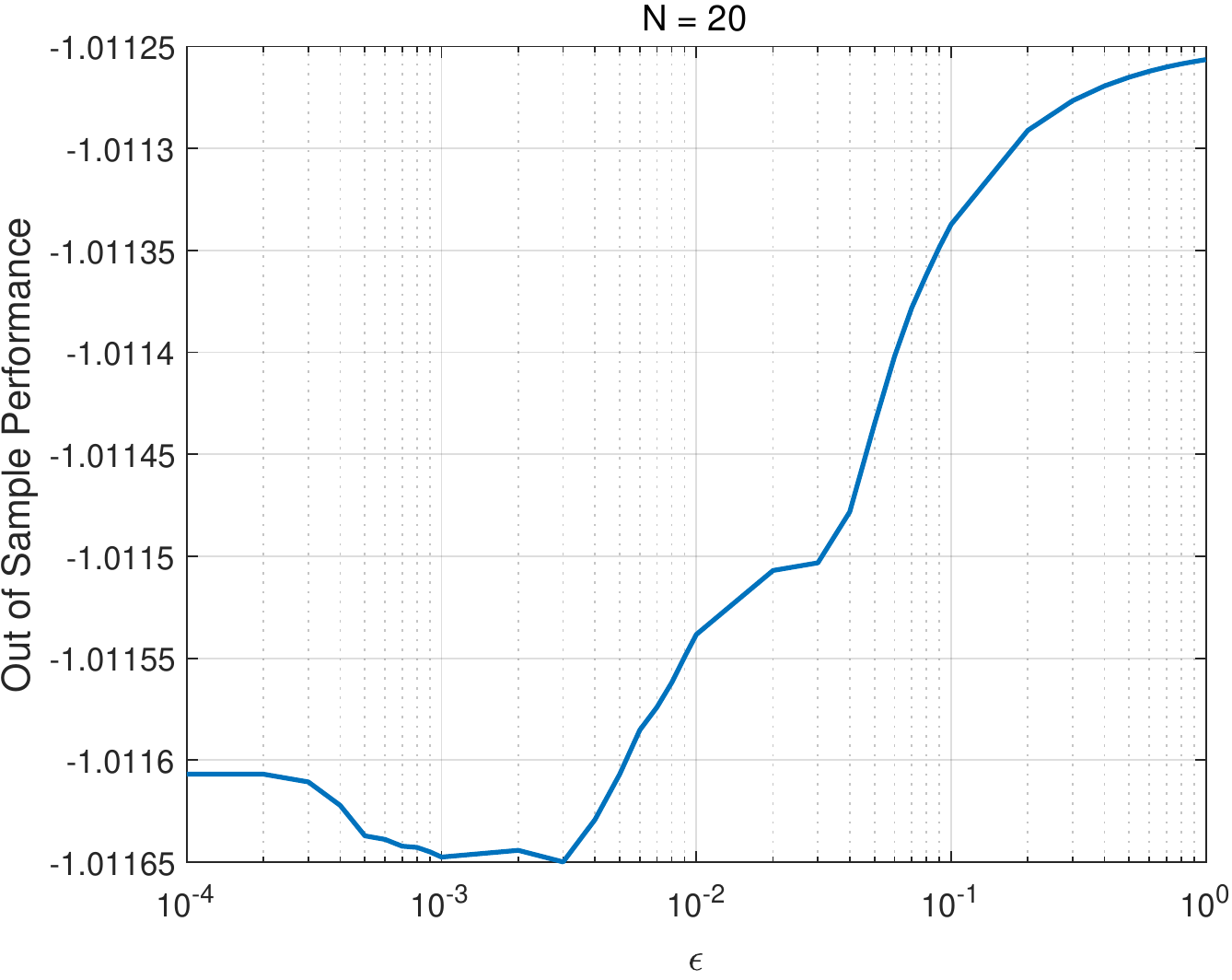}
	}
	\subfigure[ ]{
		\label{fig:100 }
		\includegraphics[width=0.3 \textwidth]{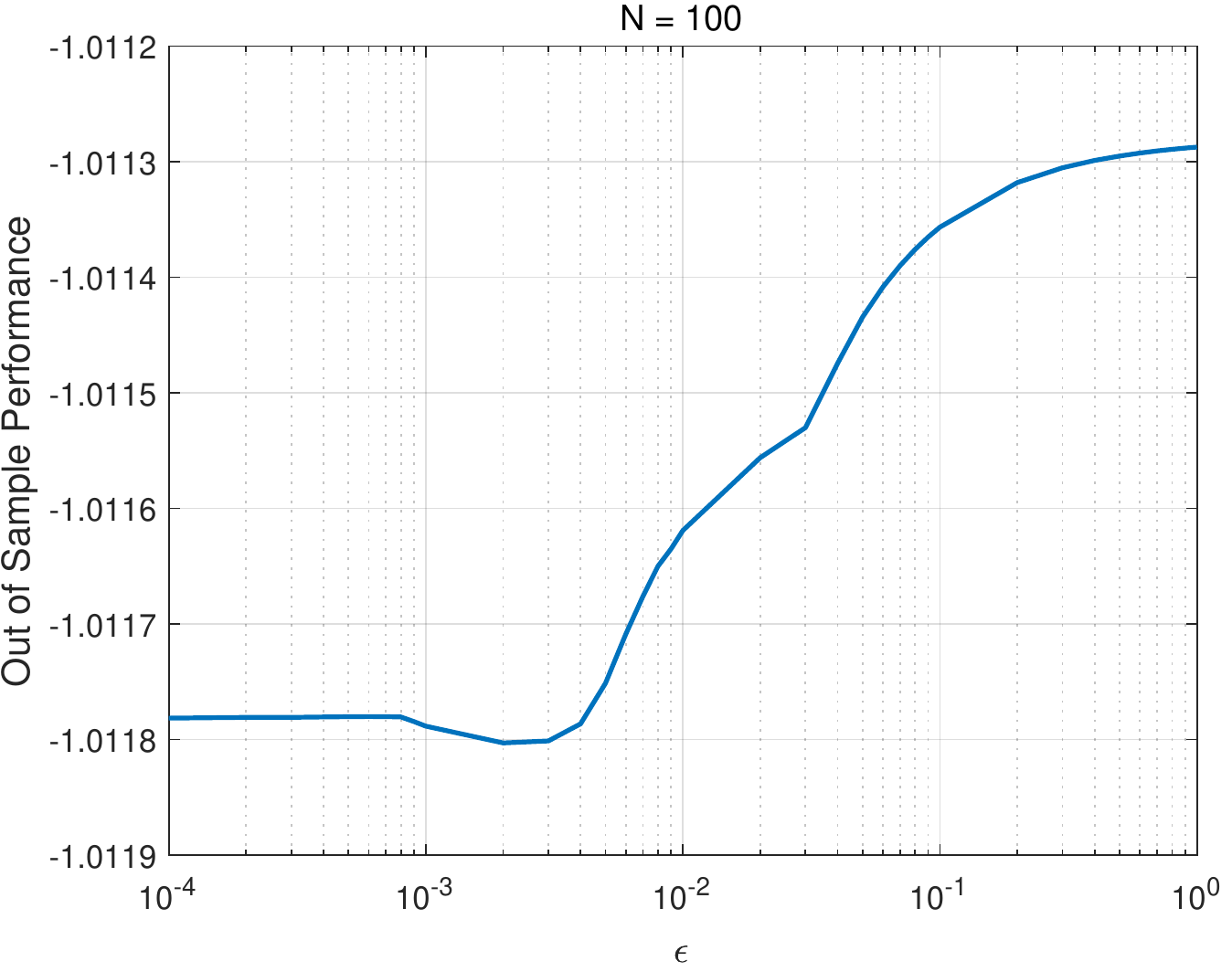}
	}
	\subfigure[ ]{
		\label{fig:300}
		\includegraphics[width=0.3 \textwidth]{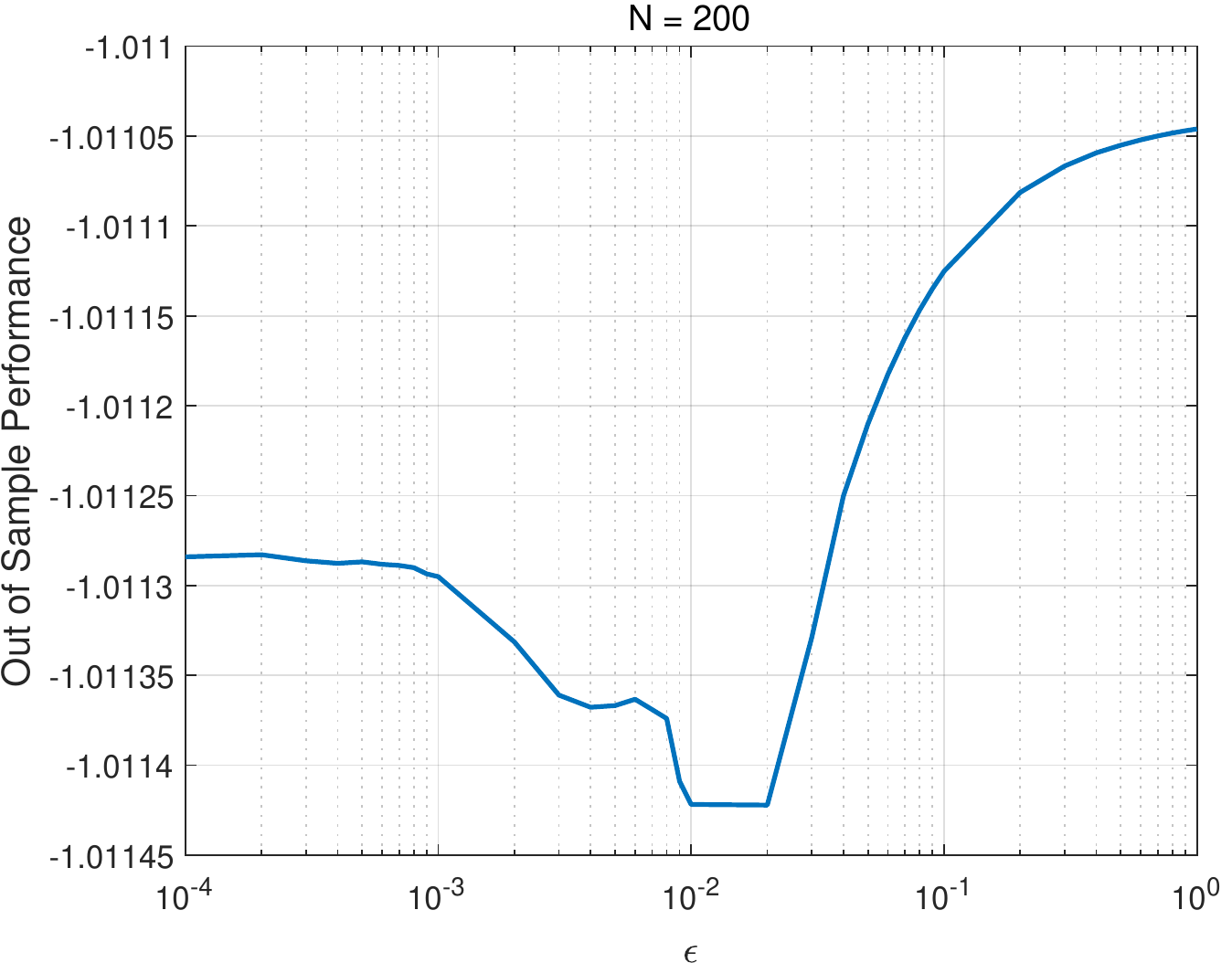}
	}
	\caption{{The averaged out-of-sample performance under sample dataset of different sizes as a function for 1-Wasserstein radius estimated by 200 independent simulation runs. (a)$N=20$, (b) $N=100$, (c) $N = 200$.}}
	\label{radius}
\end{figure*}

Experiments on different sample sizes are performed as well. The out-of-sample performance averaged over $200$ independent experiments is  presented in Figure \ref{fig:Sam}.  Theorem \ref{asy_con} is confirmed by the out-of-sample performance improvement with the growing sample size.

\begin{figure}[htbp]
	\centering
	\includegraphics[scale=0.35]{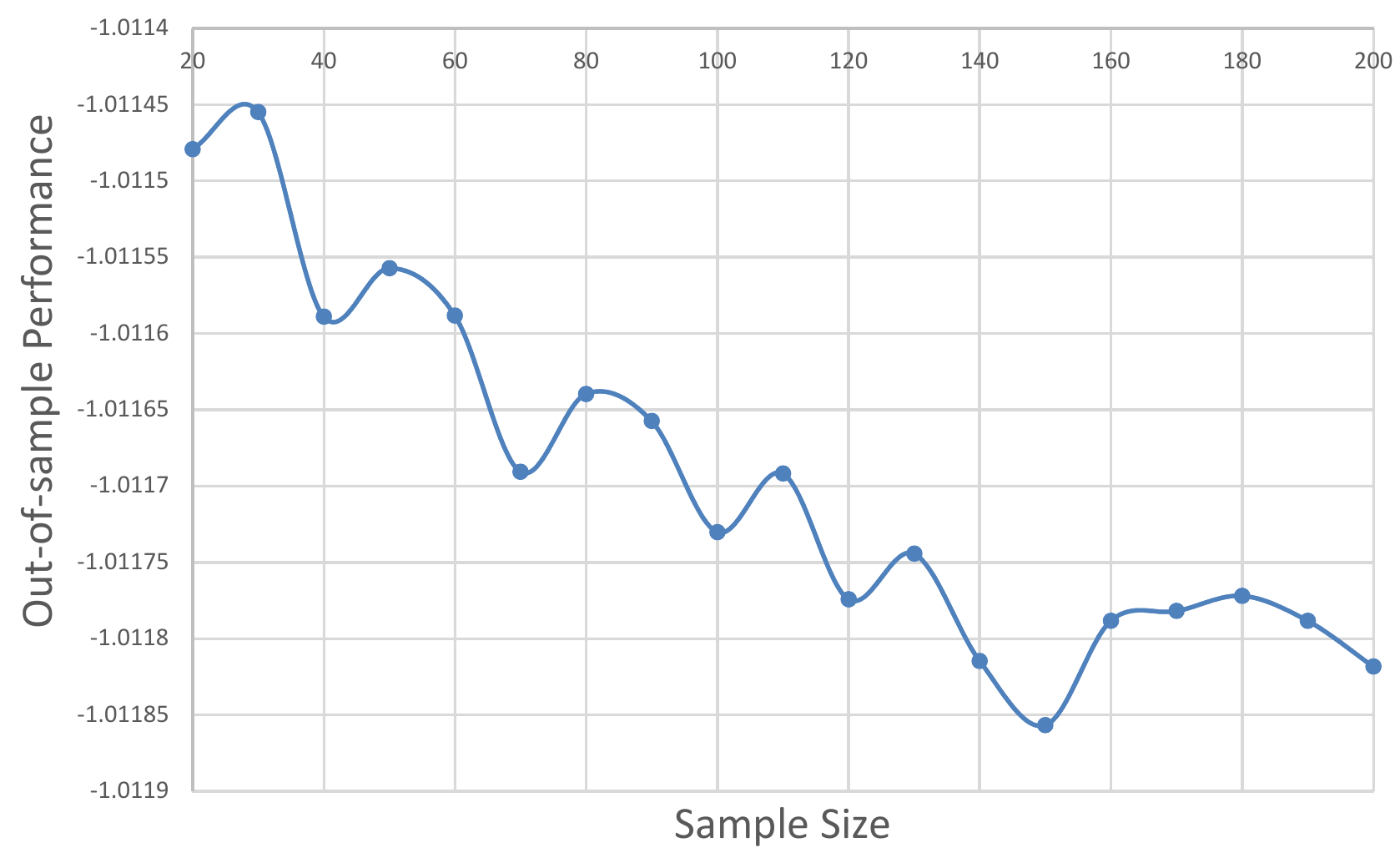}
	\caption{The averaged out-of-sample performance as a function of sample size $N$ for $200$ independent experiments.}
	\label{fig:Sam}
\end{figure}


\subsubsection{Comparisons with the State-of-the-art Methods}
In this subsection, we compare the proposed 1-Wasserstein DR model (denoted as DRW) with the SAA method and the DR model with the moment-based ambiguity set (denoted as DRM), where the first- and second-order uncertainty are borrowed from \cite{ling2017robust}. Let $N = \{20,30,50,100,200,300\}$. Due to the dependence of the radius $\epsilon_N$ on the sample dataset size, we tune it to ensure a good out-of-sample performance.

We adopt the percentage difference
$$\left(\frac{\text{DR}}{\text{SAA}}-1\right)\times 100\%$$
to compare the out-of-sample performance of those models, where DR denotes the out-of-sample performance of the DR two-stage problem and SAA denotes that of the SAA method.

\renewcommand{\arraystretch}{1} 
\begin{table}[htb]
	\centering
	\caption{Percentage differences of out-of-sample performance(in $\%$) between the DR models and the SAA }
	\begin{tabular*}{0.48\textwidth} {@{}@{\extracolsep{\fill}}ccccccc@{}}
		\toprule[1 pt]
		$N$    & 20 & 30 & 50 & 100  & 200  & 300   \\ \midrule
		DRW & 1.1 &1.6& 1.7 & 2.1 & 4.1 & 4.8  \\ 	
		DRM & -1.3 & -0.7 & 0.7 & 1.5 & 3.6 & 3.5  \\
		\bottomrule[1 pt]
	\end{tabular*}
	\label{com_out}	
\end{table}

\renewcommand{\arraystretch}{1} 
\begin{table}[htb]
	\centering
	\caption{Averaged computation time (second) of different methods}
	\begin{tabu} to 0.48\textwidth {X[1,c] X[1,c]  X[1,c]X[1,c] X[1,c]  X[1,c] X[1,c]}
		\toprule[1 pt]
		$N$    & 20 & 30 & 50 & 100  & 200  & 300   \\ \midrule
		DRW & 0.14 & 0.15& 0.15 & 0.17 & 0.16 & 0.19 \\ 	
		DRM & 0.12 & 0.14 & 0.14 & 0.16 & 0.15 & 0.16  \\
		SAA & 0.13 & 0.15 & 0.16 & 0.17 & 0.16 & 0.16  \\
		\bottomrule[1 pt]		
	\end{tabu}
	\label{com_tim}	
\end{table}

Comparisons in terms of the out-of-sample performance and computation time
are presented in Table \ref{com_out} and Table \ref{com_tim} respectively. A positive value in Table \ref{com_out} implies a better performance of the DR method than the SAA. Table \ref{com_out} indicates the best out-of-sample performance of our proposed method among all models. Importantly, it can also be solved in an acceptable time even under a large sample dataset.

\subsection{The Two-stage Material Order Problem}
Algorithm \ref{algo_whole} is applied to solve the DR two-stage ordering problem in Example \ref{exmp-c}. We omit the comparison with the moment-based model  since there is no effective method to solve it \cite{ling2017robust}.

\subsubsection{Problem Specification}
Consider the crude oil order problem for the gasoline and fuel oil supply stated in \cite{kall1994stochastic}). The oil is from two countries and can be viewed as different materials. Then the coefficients of the material order problem in  Example \ref{exmp-c} is set as
\begin{flalign}
	 &\bm{c}=[2,3]^T, \bm{d}=[7,12]^T, u = 100, \  \nonumber \\
	& A(\bm{\xi})=\left[
	\begin{matrix}
		2 + \xi_1 & 3 \\
		6 & 3.4 + \xi_2  \\
	\end{matrix}
	\right],
	\bm{b}(\bm{\xi})=\left[
	\begin{matrix}
		180 + \xi_3  \\
		162 + \xi_4  \\
	\end{matrix}
	\right],& \nonumber
\end{flalign}
where $\bm{\xi} \in \mathbb{R}^4$ is a random vector with an unknown distribution and the recourse matrix $B$ is the identity matrix. We assume that $\bm{\xi}$ follows a Gaussian distribution $\mathcal{N}(\bm{\mu},\bm{\Sigma})$ with $ \bm{\mu} =[0,0,0,0]^T$ and $\bm{\Sigma} = \text{Diag}([9,12,0.21,0.16]^T)$, and generate $N$ samples to construct the 1-Wasserstein ball $\mathcal{F}_N$.

\subsubsection{Test the Tightness of Bounds}

We test the tightness of the proposed bounds in MP and SuP for an optimal function value (O.F.V) and the first-stage cost over the 1-Wasserstein ball with different radii $\epsilon_N$. Obviously, the extreme points of the set $\mathcal{P}=\{\bm{p} \ge 0: \bm{p}\le \bm{d}\} = \{\bm{p}\in \mathbb{R}^2_+: p_1 \le 7,p_2 \le 12 \}$ are $[0,0]^T, [0,12]^T, [7,0]^T \  \text{and} \ [7,12]^T$. Hence, we can solve \eqref{primal} directly with explicitly known extreme points and compare with Algorithm \ref{algo_whole}. Let $(x^d_1,x^d_2)$ denote the solution obtained via solving \eqref{primal} directly and $(x^{a}_1,x^{a}_2)$ obtained by Algorithm \ref{algo_whole}.  Table \ref{tab2} indicates that the two methods under different 1-Wasserstein radius obtain identical results.

\renewcommand{\arraystretch}{1.1} 
\begin{table*}[htb]
	\centering
	\caption{The optimal solutions under different methods with different 1-Wasserstein ball radii $\epsilon_N$ when sample size $N = 500$}
	\begin{tabu} to 0.8\textwidth{X[1,c] X[1,c]  X[1,c]  X[1,c] X[1,c]  X[1,c] X[1,c]}
		\toprule[1 pt]
		$\epsilon_N$ & \ \ 0.01   \ \ &  0.21 &  0.41 \ \ &  0.61 \ \ &  0.81 \ \ & 1\\ \hline
		$(x^d_1,x^d_2)$ & (42.7,57.2) & (41.2,50.8) &(38.7,41.5) &(36.2,32.4)& (34.7,26.4) &(33.4,22.5) \\
		$(x^{a}_1,x^{a}_2)$ & (42.7,57.2) & (41.2,50.8) &(38.7,41.5)&(36.2,32.4)& (34.7,26.4) &(33.4,22.5) \\
		\bottomrule[1 pt]	
	\end{tabu}
	\label{tab2}	
\end{table*}

The O.F.V. and the first-stage cost compared to that of the method with known extreme points under $500$ samples is shown in Fig.\ref{bound_per_OFV} and Fig.\ref{bound_per_first}. We observe that both the lower bound and upper bound are tight, regardless of the radius of the 1-Wasserstein ball. Thus, these bounds can be viewed as a good reference to verify the performance of our algorithm.

\begin{figure}[htb]
\centering
    \subfigure[ ]{
		\label{bound_per_OFV}
		\includegraphics[width=0.35 \textwidth]{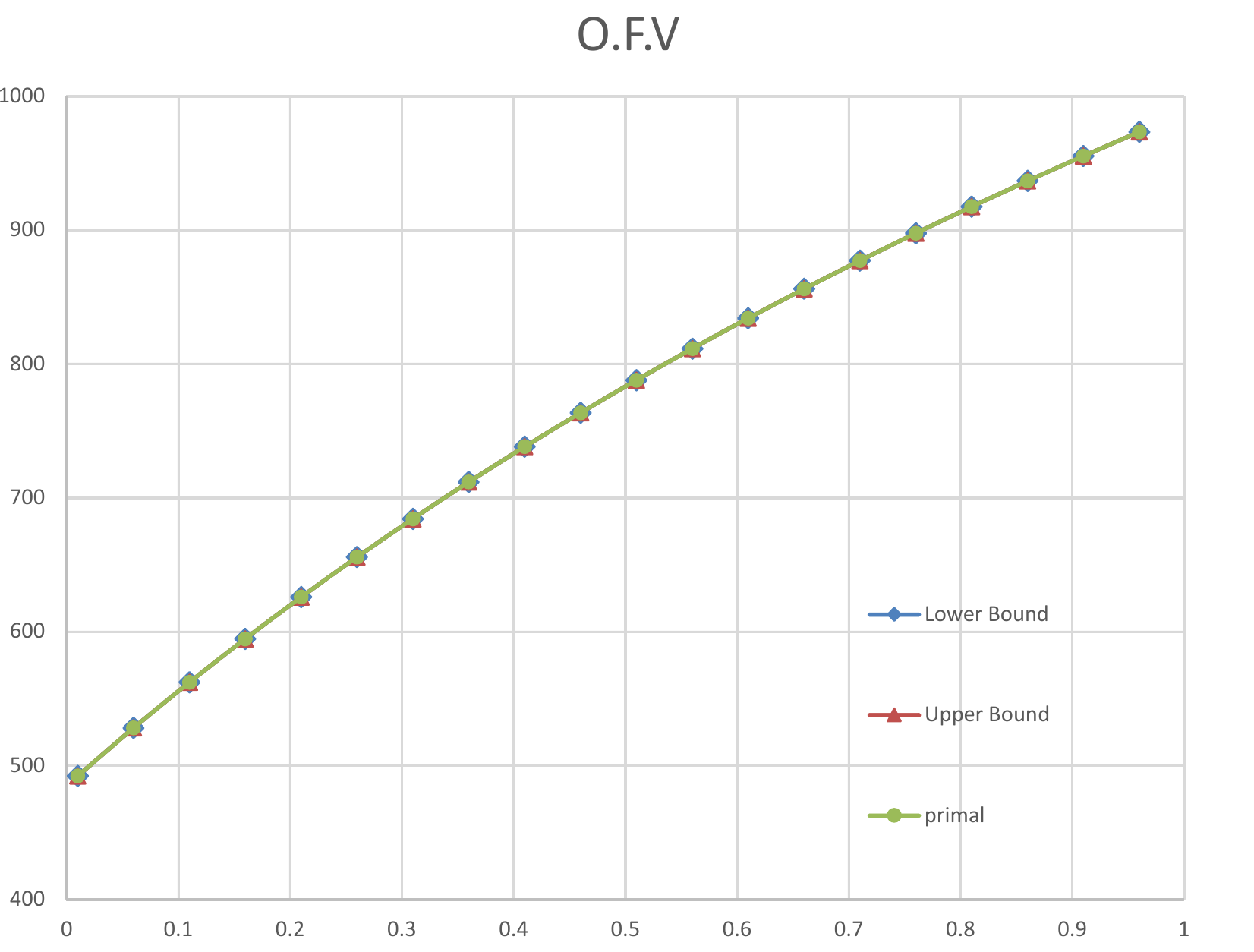}
	}
    \subfigure[ ]{
		\label{bound_per_first}
		\includegraphics[width=0.35 \textwidth]{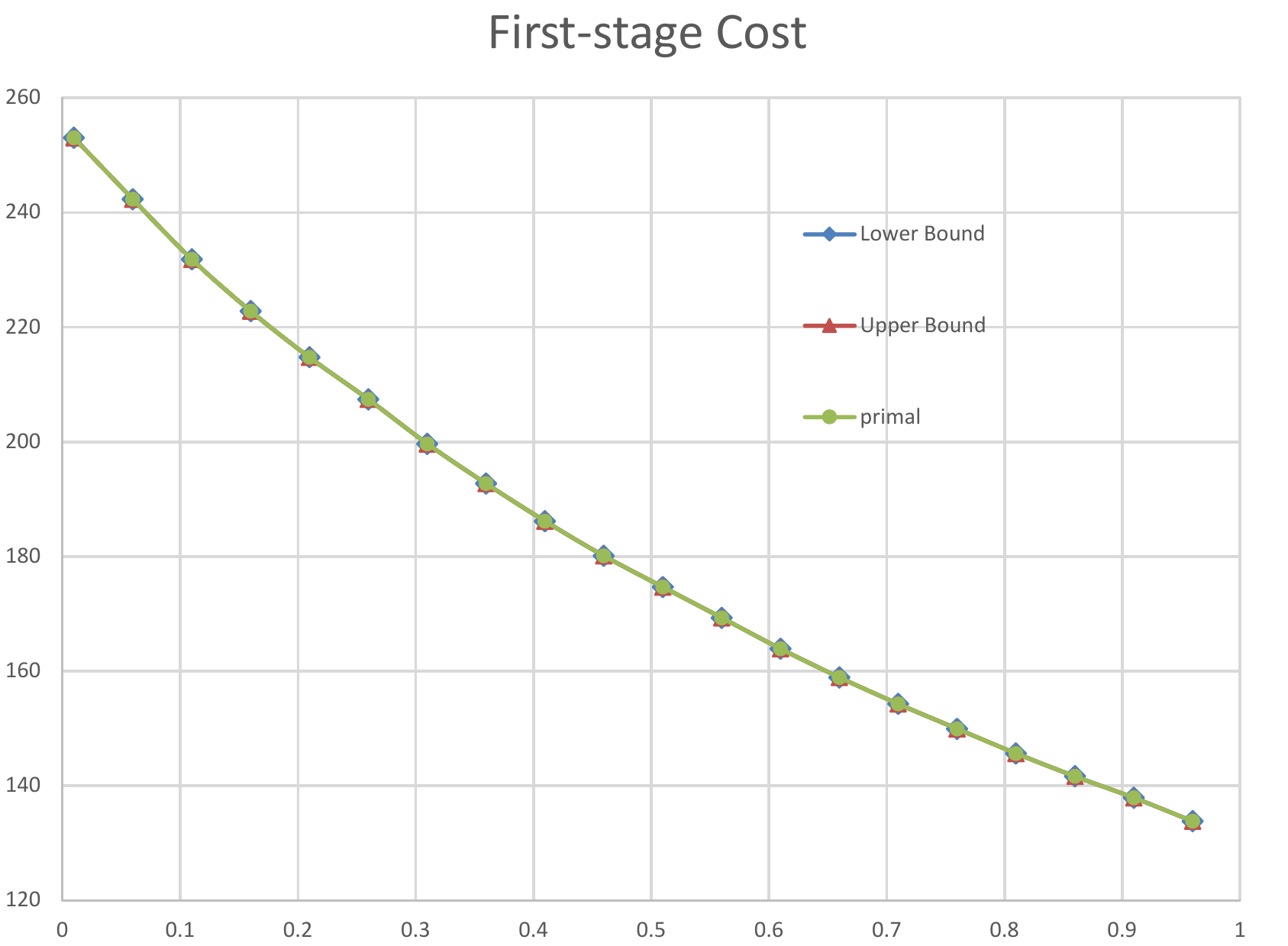}
	}
	\caption{The averaged performance of the proposed bounds for O.F.V. and the first-stage cost under the 1-Wasserstein ball with different radii. (a) O.F.V (b) the first-stage cost}
\end{figure}


\renewcommand\arraystretch{0.3}
\begin{table}[htb]
	\centering
	\caption{The averaged  number of extreme points under different sample sizes }
	\begin{tabu} to 0.48\textwidth{X[1,c] X[1,c] X[1,c] X[1,c]X[1,c] X[1,c] X[1,c] X[1,c] X[1,c] X[1,c]}
		\toprule[1 pt]
		$N$   & 10  &  20   & 30   &  50   & 100  &   200  &  300   & 500  & 1000 \\ \midrule
		Num & 3.68 & 3.74 & 3.98 & 3.96 & 4 & 4 & 4 & 4 & 4 \\ 		
		\bottomrule[1 pt]	
	\end{tabu}
	\label{tab4}	
\end{table}

\renewcommand\arraystretch{0.3}
\begin{table}[!htb]
	\centering
	\caption{The averaged number of iterations  under different sample sizes}
	\begin{tabu} to 0.48\textwidth{X[1,c] X[1,c] X[1,c] X[1,c]X[1,c] X[1,c] X[1,c] X[1,c] X[1,c] X[1,c]}
		\toprule[1 pt]
		$N$   & 10  &  20   & 30   &  50   & 100  &   200  &  300   & 500  & 1000 \\ \midrule
		Ite & 3.78 & 3.84 & 3.94 & 3.94 & 4 & 4 & 4 & 4 & 4 \\
		\bottomrule[1 pt]	
	\end{tabu}
	\label{ite-num}	
\end{table}

\begin{figure}[!htb]
	\centering
	\includegraphics[scale=0.35]{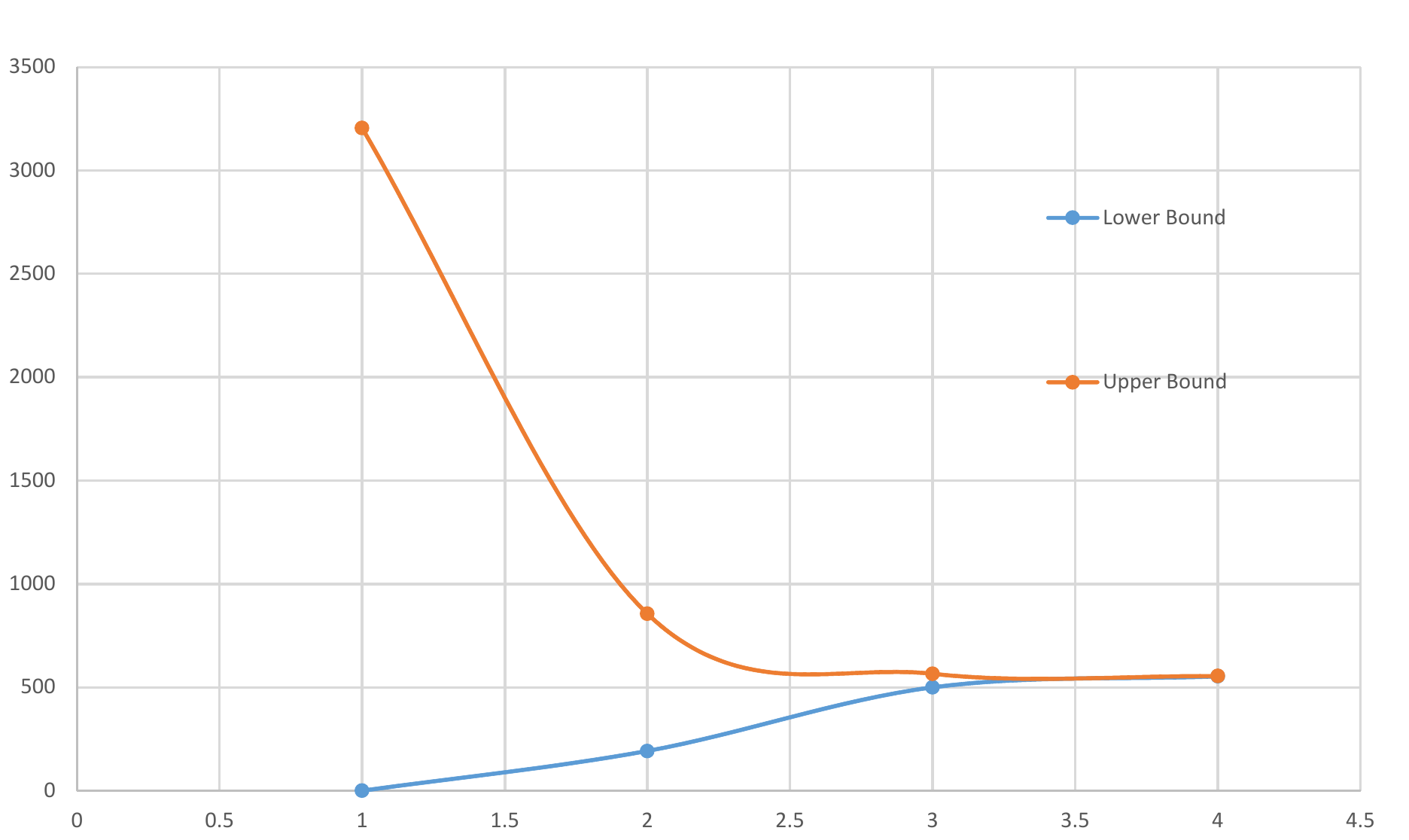}
	\caption{The convergence of the O.F.V for the two-stage program with $500$ samples.}
	\label{tendency_1}
\end{figure}

Fig.\ref{tendency_1} shows the tendency of the upper and lower bound for the proposed two-stage program in a single experiment. We record the averaged number of the extreme points and  iterations in Algorithm \ref{algo_whole} under different sample sizes over $100$ independent experiments in Table \ref{tab4} and Table \ref{ite-num}, both of which validate the effectiveness of Algorithm \ref{algo_whole}.



\subsubsection{The Test for High Dimension}
A direct enumeration of all extreme points of the polyhedron $\mathcal{P} = \{\bm{p}\in \mathbb{R}^M_+:B^T\bm{p}\le\bm{d}\}$ with a large $M$ is computational demanding \cite{khachiyan2009generating}. In this subsection, we consider a high dimension problem to verify the efficiency of Algorithm \ref{algo_whole}, i.e.,
\begin{flalign*}
& u = 1000, ~\bm{x} \in \mathbb{R}^{20}, ~A(\bm \xi) \in \mathbb{R}^{20\times 20}, ~ \bm{b}(\bm \xi)\in \mathbb{R}^{20}, \\
& \bm{c} = [2,3,1,4,5,2,4,3,4,2,5,4,4,2,6,2,4,3,1,2]^T, \\
& \bm{d} = [7,9,4,6,8,5,6,8,10,7,12,10,6,7,9,5,11,10,5,8]^T,
\end{flalign*}
where $A(\bm \xi)$ and $\bm{b}(\bm \xi)$ are affinely dependent on the random vector $\bm \xi$ and $B$ is the identity matrix.


Fig.\ref{bound_hper_OFV} and Fig.\ref{bound_hper_first} report the averaged performance of our proposed bounds for the O.F.V and the first-stage cost under different 1-Wasserstein radii $\epsilon_N$ when the sample size $N = 500$. As  previous subsection, these proposed bounds are tight as well.

\begin{figure}[htb]
\centering
    \subfigure[ ]{
		\label{bound_hper_OFV}
		\includegraphics[width=0.35 \textwidth]{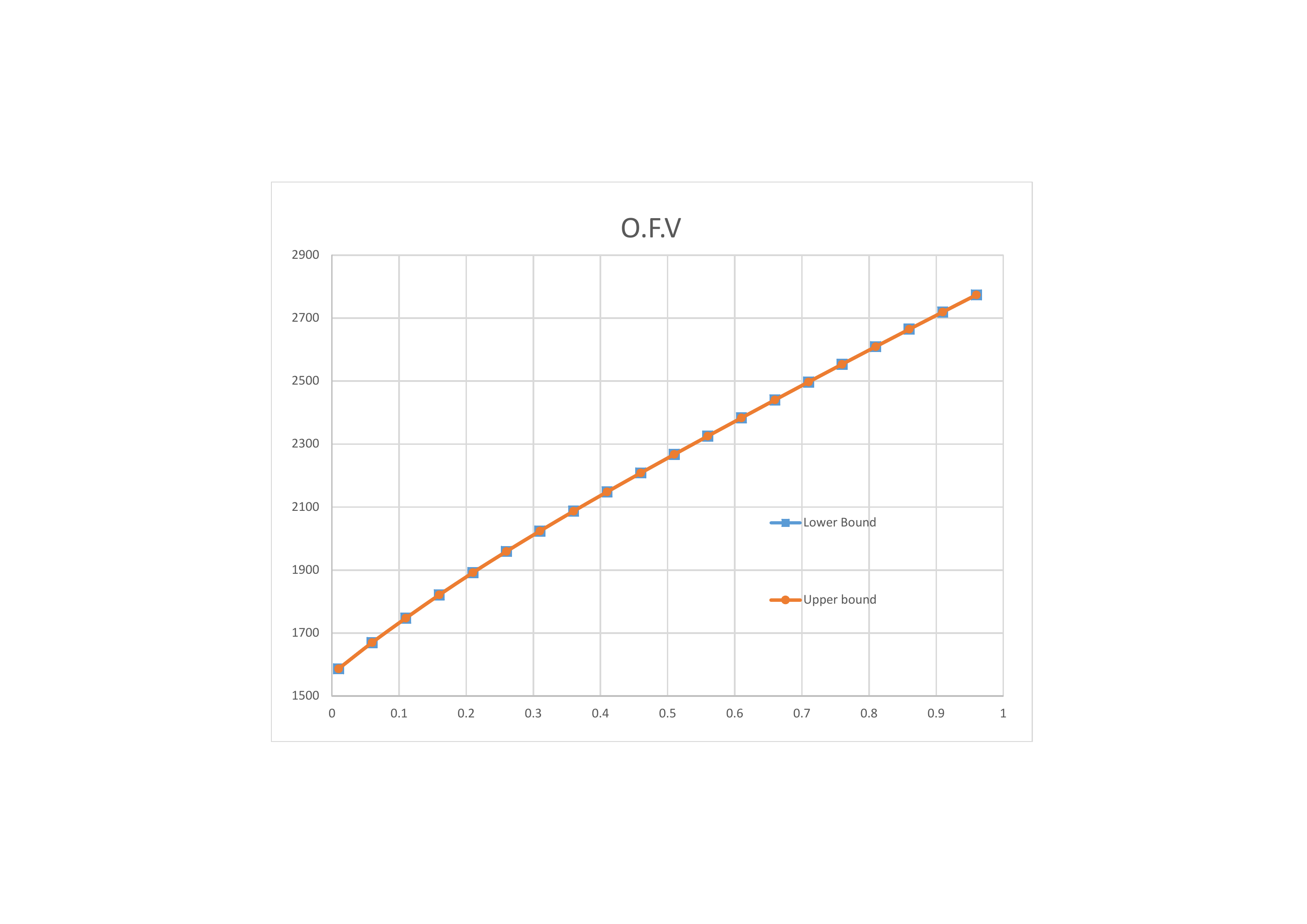}
	}
    \subfigure[ ]{
		\label{bound_hper_first}
		\includegraphics[width=0.35 \textwidth]{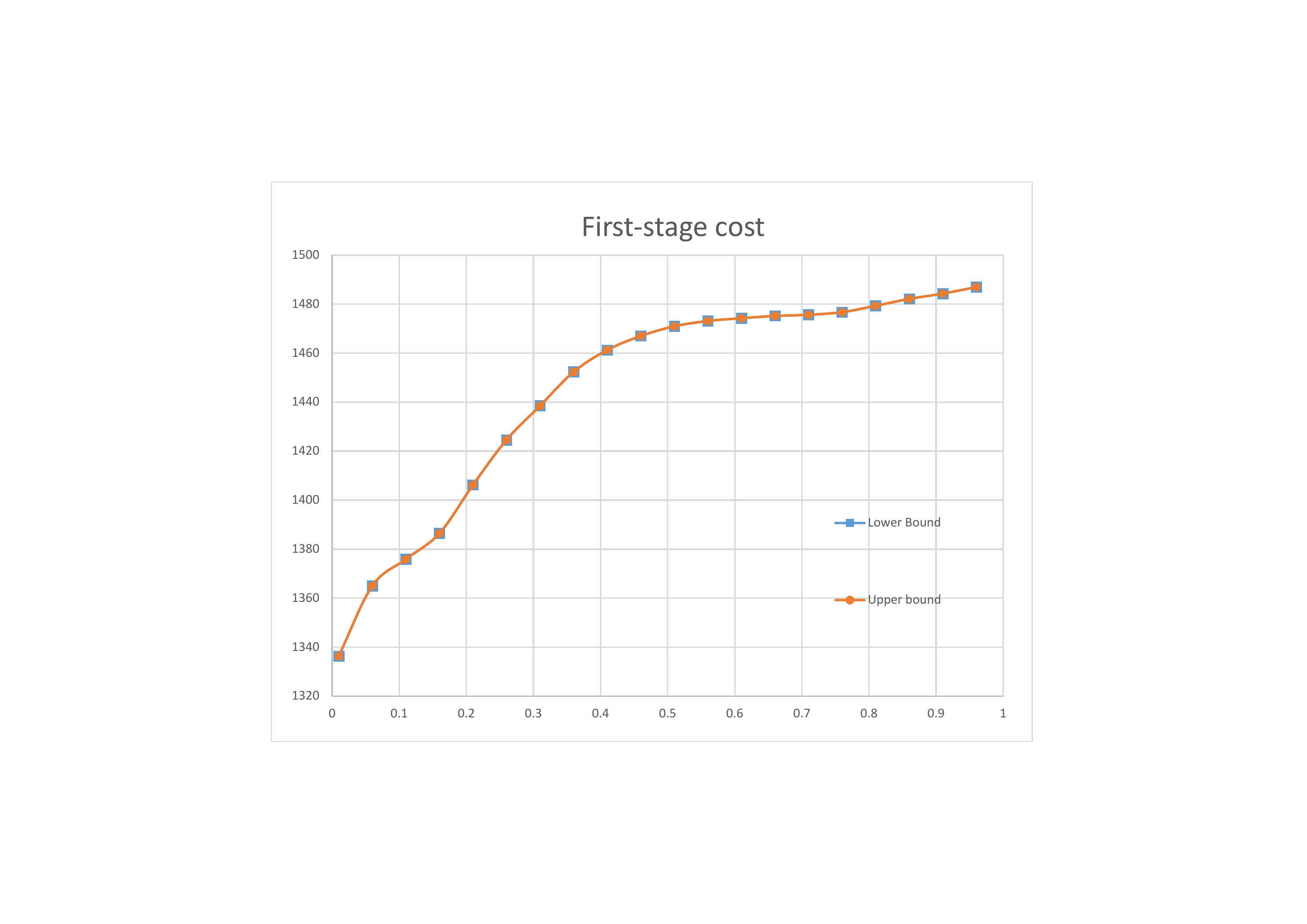}
	}
	\caption{The averaged performance of the proposed bounds for O.F.V. and first-stage cost under the 1-Wasserstein ball with different radii. (a) O.F.V (b) first-stage cost}
\end{figure}

We record the averaged computation time, the number of  extreme points  and  iterations in Algorithm \ref{algo_whole} over $100$ independent simulations as sample size $N$ varies from $10$ to $1000$ in Table \ref{tab5}, Table \ref{tab6} and Table \ref{tab7} respectively. The convergence of the proposed algorithm in a single experiment is also illustrated in
Fig.\ref{tendency}.

\renewcommand{\arraystretch}{0.3} 
\begin{table}[h]
	\centering
	\caption{The averaged computation time (second) under different sample sizes }
	\begin{tabu} to 0.48\textwidth{X[1,c] X[1,c] X[1,c] X[1,c] X[1,c] X[1,c] X[1,c] X[1,c] X[1,c] X[1,c]}
		\toprule[1 pt]
		$N$   & 10  &  20   & 30   &  50   & 100  &   200  &  300   & 500  & 1000 \\ \midrule
		Time & 10.9 & 11.6 & 11.6 & 11.6 & 12.3 & 13.9 & 17.2 & 23.3 & 36.2 \\
		\bottomrule[1 pt]
	\end{tabu}
	\label{tab5}	
\end{table}

\renewcommand{\arraystretch}{0.3} 
\begin{table}[htb]
	\centering
	\caption{The averaged number of extreme points  under different sample sizes}
	\begin{tabu} to 0.48\textwidth{X[1,c] X[1,c] X[1,c] X[1,c] X[1,c] X[1,c] X[1.2,c] X[1.2,c] X[1.2,c] X[1.2,c]}
		\toprule[1 pt]
		$N$   & 10  &  20   & 30   &  50   & 100  &   200  &  300   & 500  & 1000 \\ \midrule	
		Num & 35.2 & 46.5 & 49.2 & 60.3 & 72.4 & 100.1 & 123.7 & 156.4 & 181.1 \\
		\bottomrule[1 pt]
	\end{tabu}
	\label{tab6}	
\end{table}

\renewcommand{\arraystretch}{0.3} 
\begin{table}[!htb]
	\centering
	\caption{The averaged number of iterations  under different sample sizes }
	\begin{tabu} to 0.48\textwidth{X[1,c] X[1,c] X[1,c] X[1,c] X[1,c] X[1,c] X[1,c] X[1,c] X[1,c] X[1,c]}
		\toprule[1 pt]
		$N$   & 10  &  20   & 30   &  50   & 100  &   200  &  300   & 500  & 1000 \\ \midrule	
		Ite & 10.28 & 9.58 & 9.32 & 8.92 & 8.80 & 8.54 & 8.58 & 8.16 & 8.46\\
		\bottomrule[1 pt]
	\end{tabu}
	\label{tab7}	
\end{table}

\begin{figure}[!htb]
	\centering
	\includegraphics[scale=0.35]{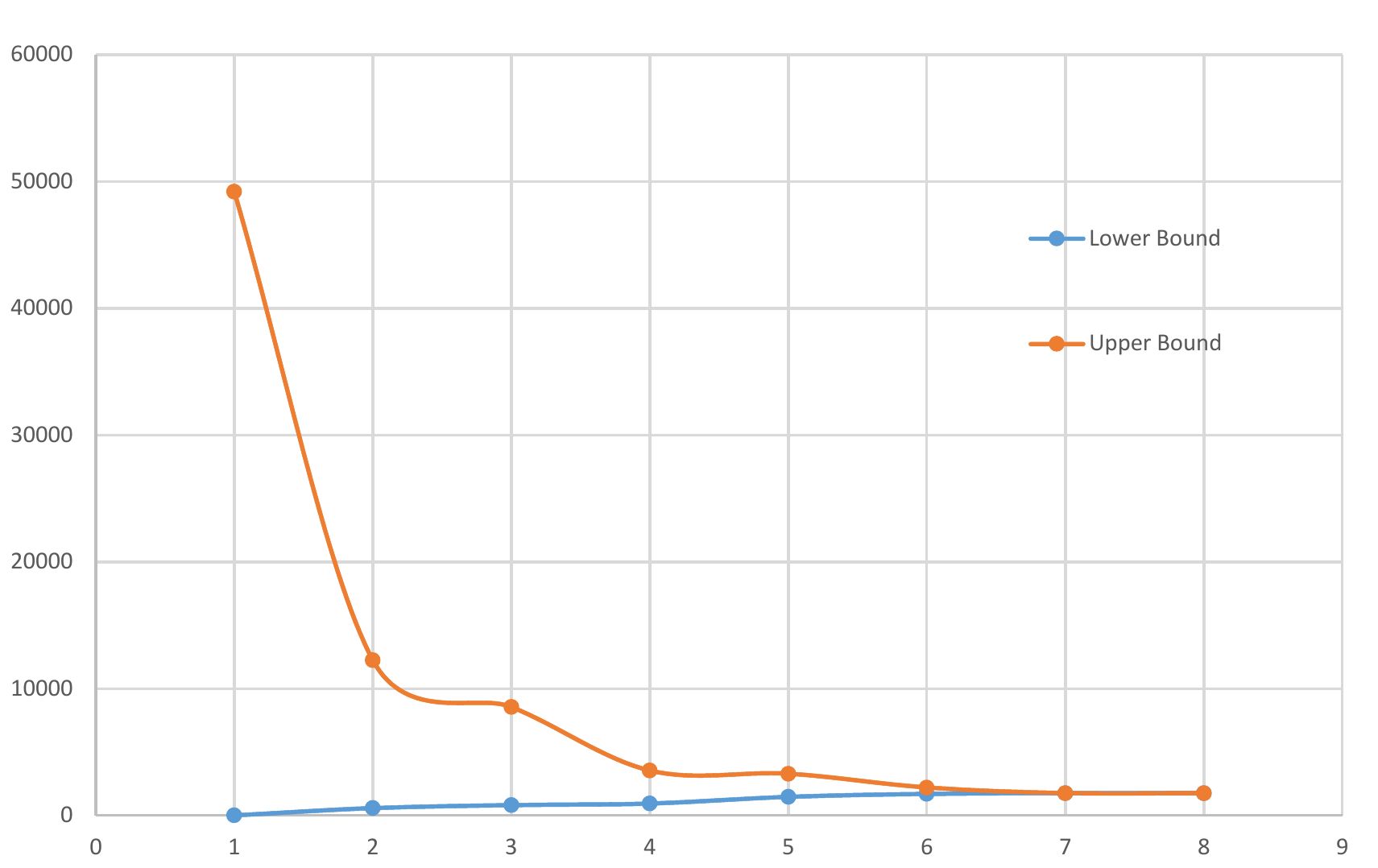}
	\caption{The convergence of the O.F.V for the two-stage program with $500$ samples.}
	\label{tendency}
\end{figure}

Results show that Algorithm \ref{algo_whole} converges in a reasonable time even for the problem in a high dimension under a large sample dataset.
The number of extreme points required in our algorithm is far smaller than the total number of extreme points.


\section{Conclusion} \label{con}
We have proposed a novel SOCP approach to solve the data-driven DR two-stage linear programs over 1-Wasserstein balls. The model with distribution uncertainty in the objective function is reformulated as a solvable SOCP problem.
While the DR model over the moment-based ambiguity set is generally unsolvable, we propose a constraint generation algorithm with provable convergence to approximately solve the NP-hard model with distribution uncertainty only in constraints. We explicitly derive a distribution achieving the worst-case cost. Numerical results validate the good out-of-sample performance for our model and the high efficiency of the proposed algorithm.

\bibliographystyle{IEEEtran}
\bibliography{mybib}

\begin{IEEEbiography}[{\includegraphics[width=1in,height=1.25in,clip,keepaspectratio]{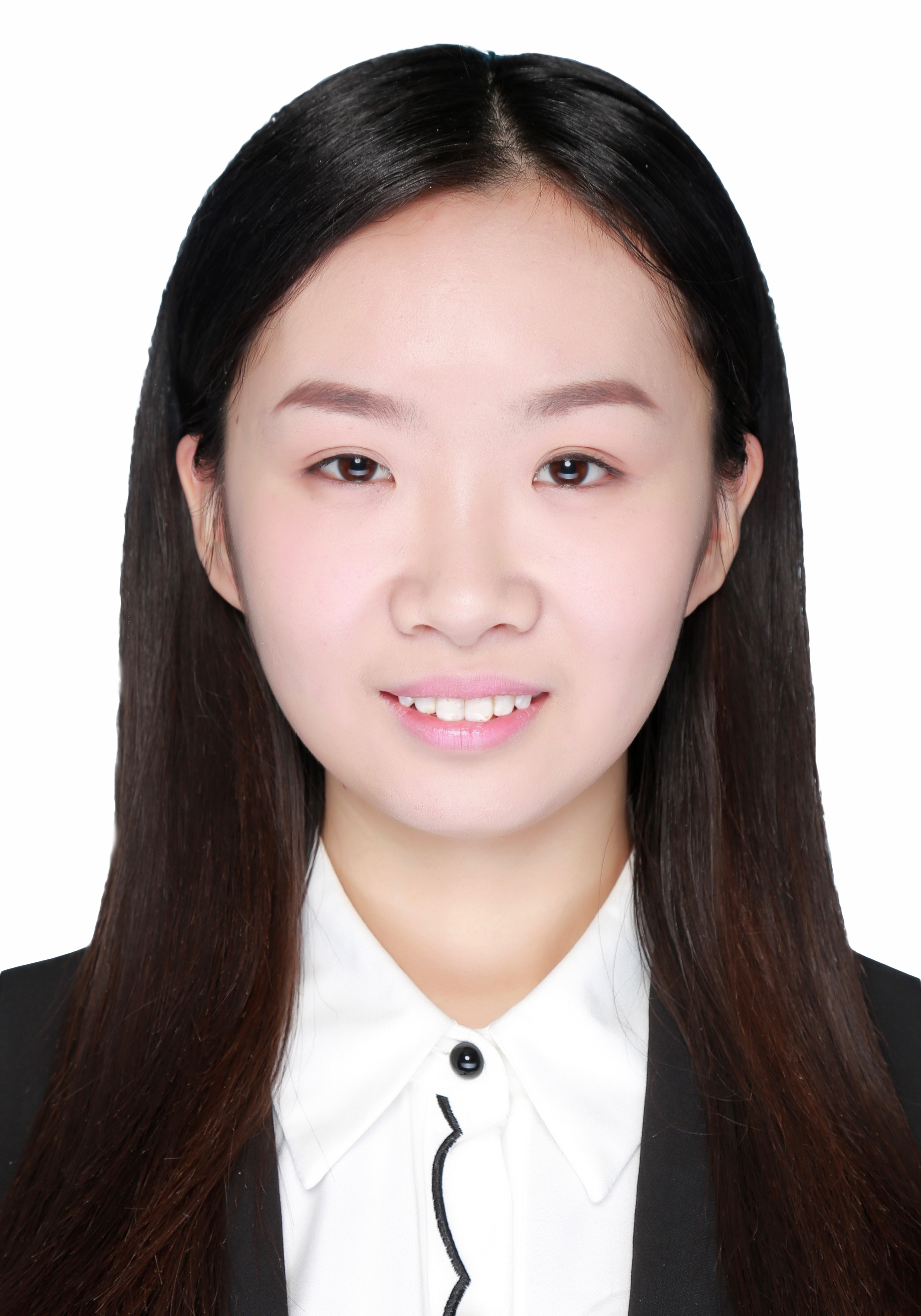}}]{Zhuolin Wang} received the B.S. degree from the School of Information Computer Science and Technology, Beijing Normal University, Beijing, China, in 2017. She is currently pursuing the Ph.D. degree at the Department of Automation, Tsinghua University, Beijing, China. Her  research interests include distributionally robust optimization, robust control, and their applications.
\end{IEEEbiography}

\begin{IEEEbiography}[{\includegraphics[width=1in,height=1.25in,clip,keepaspectratio]{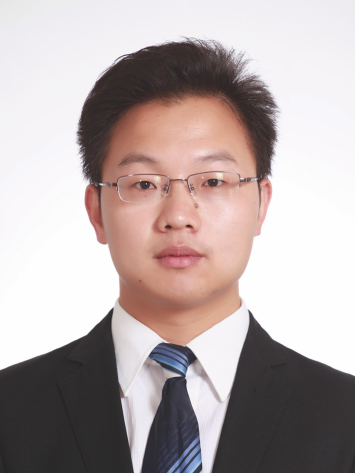}}]
{Keyou You} (SM'17)  received the B.S. degree in Statistical Science from Sun Yat-sen University, Guangzhou, China, in 2007 and the Ph.D. degree in Electrical and Electronic Engineering from Nanyang Technological University (NTU), Singapore, in 2012. After briefly working as a Research Fellow at NTU, he joined Tsinghua University in Beijing, China where he is now a tenured Associate Professor in the Department of Automation. He held visiting positions at Politecnico di Torino, The Hong Kong University of Science and Technology, The University of Melbourne and etc. His current research interests include networked control systems, distributed optimization and learning, and their applications.

	Dr. You received the Guan Zhaozhi award at the 29th Chinese Control Conference in 2010, the CSC-IBM China Faculty Award in 2014 and the ACA (Asian Control Association) Temasek Young Educator Award in 2019. He was selected to the National 1000-Youth Talent Program of China in 2014 and received the National Science Fund for Excellent Young Scholars in 2017.
	
\end{IEEEbiography}

\begin{IEEEbiography}[{\includegraphics[width=1in,height=1.25in,clip,keepaspectratio]{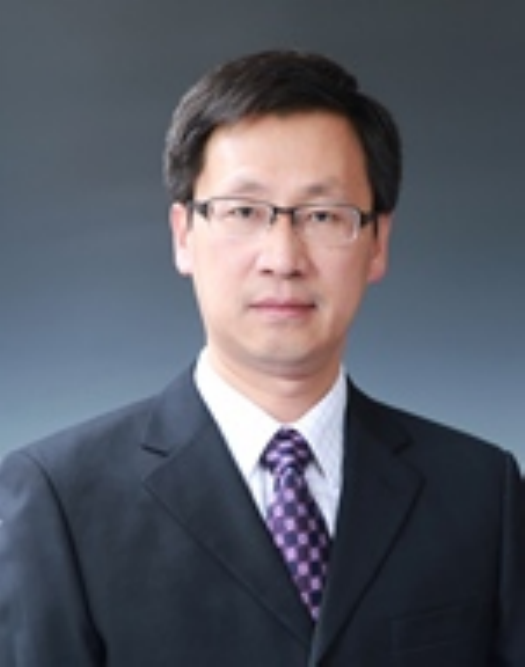}}]{Shiji Song}
received the Ph.D. degree in the Department of Mathematics from Harbin Institute of Technology in 1996. He is a professor in the Department of Automation, Tsinghua University. His research interests include system modeling, control and optimization, computational intelligence and pattern recognition.
\end{IEEEbiography}

\begin{IEEEbiography}[{\includegraphics[width=1in,height=1.25in,clip,keepaspectratio]{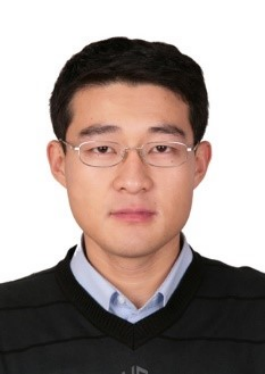}}]{Yuli Zhang}
 received the Ph.D. degree from Tsinghua University, Beijing, China, in 2014. He is an Associate Professor in the School of Management and Economics, Beijing Institute of Technology. His research interest includes robust optimization, intelligent transportation systems, and intelligent
power grids.
\end{IEEEbiography}

\end{document}